\newtheorem{thm}{Theorem}[section]
\newtheorem{lem}[thm]{Lemma}
\newtheorem{cor}[thm]{Corollary}
\newtheorem{prop}[thm]{Proposition}
\newtheorem{claim}[thm]{Claim}
\newtheorem{remark}[thm]{Remark}
\newtheorem{definition}[thm]{Definition}
\newcommand{\R}{\mathbb{R}}
\newcommand{\ol}{\overline}
\newcommand{\K}{\mathcal{K}}   
\newcommand{\M}{\mathcal M}   
\newcommand\e{\varepsilon}
\newcommand\na{\nabla}
\newcommand\p{\partial}
\renewcommand{\sf}{\mathbb{S}}
\renewcommand{\S}{\mathcal S}
\renewcommand{\-}{\setminus}
\DeclareMathOperator{\dist}{dist}
\newcommand\HD{{\rm{HD}}} 
\newcommand{\fb}[1]{\partial\{#1>0\}}
\newcommand{\fbs}[1]{\partial_{\rm{sing}}\{#1>0\}}
\newcommand{\I}[1]{\chi_{\{#1>0\}}}
\newcommand\vs{\vskip1cm}
\numberwithin{equation}{section} 
\tikzstyle arrowstyle=[scale=1]
\tikzstyle directed=[postaction={decorate,decoration={markings,
    mark=at position .65 with {\arrow[arrowstyle]{stealth}}}}]
\tikzstyle reverse directed=[postaction={decorate,decoration={markings,
    mark=at position .65 with {\arrowreversed[arrowstyle]{stealth};}}}]
\title[Nerve conduction problem]
{Structure of singularities in the nonlinear nerve conduction problem}
\author{Aram L. Karakhanyan}
\address{School of Mathematics, University of Edinburgh, Peter Guthrie Tait Road
Edinburgh, 
EH9 3FD}
\email{aram6k@gmail.com}
\urladdr{www.maths.ed.ac.uk/$\sim$aram}
\date{\today \,(Last Typeset)}
\subjclass[2010]{35R35.}
\keywords{Free boundary regularity, fully nonlinear elliptic operator, nerve impulse propagation problem, obstacle problem}
\begin{document}

\begin{abstract}
We give a characterization of the singular points of the free boundary 
$\p \{u>0\}$ 
for viscosity solutions of the nonlinear equation
\begin{equation}\label{eq:abst}
F(D^2 u)=-\chi_{\{u>0\}}, 
\end{equation}
where $F$ is a 
fully nonlinear elliptic operator and $\chi$ the characteristic function. 
The equation \eqref{eq:abst} models the propagation of a nerve impulse along an axon.

We analyze the structure of the free boundary $\fb u$ near the singular points where $u$ and $\na u$ vanish simultaneously. Our method uses the stratification approach developed in \cite{DK}.  

In particular,  when $n=2$ we show that near a rank-2 flat singular free boundary point $\fb u$ is 
a union of four $C^1$ arcs tangential to a pair of crossing lines.
Moreover,  if $F$ is linear then the singular set of $\fb u$ is the union of degenerate and rank-2 flat points. 

We also provide an application of the boundary Harnack principles to study the higher order flat degenerate 
points,  and show that if $\{u<0\}$ is a cone then the blow-ups of $u$ 
are homogeneous functions.
\end{abstract}

\maketitle

\section{Introduction}\label{sec1}
In this paper we study the free boundary problem 
\begin{equation}\label{eq:F}
F(D^2 u)=-\chi_{\{u>0\}}\quad\ \mbox{in}\ \Omega, 
\end{equation}
where $\Omega\subset \R^n$ is a given bounded domain with $C^{2, \alpha}$ boundary,  $\I u$ the characteristic function function of $\{u>0\}$, and
$F$ a convex   fully nonlinear elliptic operator satisfying some structural conditions. 
The partial differential equation \eqref{eq:F} appears in a model of the nerve impulse 
propagation 
\cite{Feroe}, \cite{Pauwelussen}, \cite{Rinzel}. 

It comes from the following linearized 
diffusion system of FitzHugh
\begin{equation}\label{FitzHugh}
\left\{
\begin{array}{lll}
u_t=r(x)\Delta u+F(u, \vec v), \\
\vec v_t=G(u, \vec v), 
\end{array}
\right.
\end{equation}
where $u(x, t)$ is the voltage across the nerve membrane at distance $x$ and time $t$, 
and components of $\vec v=(v^1, \dots, v^k)$ model the conductance of the membrane to various ions \cite{Feroe}.
Mckean suggested to consider $F(u, \vec v)=-u+\I u$ \cite{McKean}.
Due to the homogeneity of the equation  the linear term disappears after quadratic scaling,  so we neglect it.

The linearized steady state equation 
\begin{equation}\label{eq:Lap}
\Delta u=-\I u
\end{equation} 
also arises in a solid combustion model \cite{Monneau-W}, and the composite membrane problem, see \cite{Kenig}, also \cite{CK} for its variational formulation. 

\medskip 
A chief difficulty is to analyze the free boundary near singular points where both $u$ and $\na u$ vanish. 
The main technique used in \cite{Monneau-W}, \cite{Kenig}, \cite{CK} is a monotonicity formula, which is not 
available for the nonlinear equations. 
The aim of this paper is to use the boundary Harnack principles and anisotropic scalings 
to  develop a new approach to circumvent the  lack of the monotonicity formula 
and obtain some of  the main results from \cite{Monneau-W} and \cite{Lee-Sh} for 
the fully nonlinear case.
More precisely, in this paper we address the optimal regularity, degeneracy and the shape of the free boundary near the singular points.

\bigskip

One of the 
main results in \cite{Monneau-W} concerns the  
cross shaped singularities in $\R^2$. It follows from the classification of homogeneous solutions and an application of the monotonicity formula introduced in \cite{Monneau-W}. 
For nonlinear equations this method cannot be applied. 
We remark that the degenerate case (i.e. when $u(x)=o(|x-x_0|^2)$ near a free boundary point $x_0$) 
cannot be treated by the monotonicity formula introduced in \cite{Monneau-W} because
it does not provide any qualitative information about $u$,  see Proposition 5.1 \cite{Monneau-W}.
Another approach was recently used in \cite{ST}.

\bigskip
It is well known that the strong solutions of \eqref{eq:Lap} may not be $C^{1, 1}_{loc}$, see  \cite{Chemin} Proposition 5.3.1.
However, if $F=\Delta$ then $\na u$ is always log-Lipschitz continuous  \cite{Yudovich} Lemma 2.1. 
For general elliptic operators one can show that $\na u$ is $C^{\alpha}$ for every $\alpha\in (0, 1)$, \cite{CC-Fully}, see also Remark \ref{rem:concave} in Section \ref{sec:2}.

\bigskip 

Another approach, based on Harnack inequalities, 
had been developed by Tolksdorf to prove the existence of homogeneous solutions for $\Delta_p u =0$ in cones \cite{Tolksdorf} .
We employ this approach in Section \ref{sec:5}.

The problem \eqref{eq:Lap} has some resemblance with the 
classical obstacle problem \cite{Caff-cpde}. For the fully nonlinear operators
the obstacle  problem has been studied in \cite{Lee} for one phase and  in 
\cite{Xavier} for the thin obstacle.

\medskip 

The paper is organized as follows: 
In Section \ref{sec:2} we state some technical results.
In Section \ref{sec:3} we prove the existence of viscosity solutions using a penalization argument. 
We also show the existence of  a maximal solution and establish its  non-degeneracy.
Section \ref{sec:4} contains the proof of the following dichotomy: either the free boundary points 
are rank-2 flat or the solution has quadratic growth. 
As a consequence we show that if $n=2$ then near a rank-2 flat point the free boundary 
is a union of four  $C^1$ curves tangential to a pair of crossing lines. 
This is done in Section \ref{sec:6}.  
Finally, in Section \ref{sec:5} we use a boundary Harnack principle to prove the homogeneity of blow-ups near conical free boundary points.

\vs
\section{Technical results}\label{sec:2}

Throughout this paper $B_r(x)$ 
denotes the open ball of radius $r$ centered at $x\in \R^n$ and $B_r=B_r(0)$.
For continuous function $u$ we let $u=u^+-u^-$, $u^+=\max(0, u)$, $\Omega^+(u)=\{u>0\}, \Omega^-(u)=\{u<0\}$,  and $\fbs u$ is the 
singular subset of the free boundary $\fb u$,  where $u=|\na u|=0$. 

We shall make two standing assumptions on the operators under consideration.
To formulate them we let $\S$ be the space of $n\times n$ symmetric matrices and $\S^+({\lambda, \Lambda})$
positive definite symmetric matrices with eigenvalues bounded between two positive constants $\lambda$ and $\Lambda$. 
\begin{itemize}
\item[$\bf F1^\circ $] 
The operator $F:\S\subset \R^{n\times n}\to \R$ is uniformly elliptic, i.e. there are two positive constants 
$\lambda, \Lambda$ such that 
\begin{equation}\label{eq:ellip}
\lambda\|N\|\le F(M+N)-F(M)\le \Lambda\|N\|,  \quad M\in \S,
\end{equation}
for every nonnegative matrix $N$.
\item[$\bf F2^\circ $]
$F$ is  
smooth except the origin and 
homogeneous of degree one $F(tM)=tF(M), t\in \R$, and $F(0)=0$.
\end{itemize}
For smooth $F$ the hypothesis $\bf F1^\circ$ is equivalent to 
\[
\lambda \left| \xi \right| ^{2}\leq F_{ij}\left( M\right) \xi_i \xi _{j}\leq \Lambda \left| \xi \right| ^{2}, 
\]
where $F_{ij}(S)=\frac{\p F(S)}{\p s_{ij}}, S=[s_{ij}]$. 

Typically, $F(M)=\sup_{t\in \mathcal I}A_{ij, t}M_{ij}$, 
where $\mathcal I$ is the index set and $A_{ij,t}\in \S^+({\lambda, \Lambda})$
such that 
$\lambda \left| \xi \right| ^{2}\leq A_{ij, t}\xi_i \xi _{j}\leq \Lambda \left| \xi \right| ^{2}$.
Notice that if $w_t(x)=w(A_t^{\frac12}x)$ then 
$\Delta w_t=A_{ij, t}w_{ij}$.

\medskip 

We also define Pucci's extremal operators 
\[
\M^{-}\left( M,\lambda ,\Lambda \right) =\lambda \sum _{e_{i} > 0}e_{i}+\Lambda \sum _{e_{i} < 0}e_{i}, 
\quad 
\M^{+}\left( M,\lambda ,\Lambda \right) =\Lambda \sum _{e_{i} > 0}e_{i}+\lambda \sum _{e_{i} < 0}e_{i},
\]
where $e_1\le e_2\le \dots\le e_n$ are the eigenvalues of $M\in \S$.

\begin{definition}
A continuous function $u$ is said to be a viscosity solution  of 
$F(D^2u) = -\I u$, 
if the equation $F(D^2v(x_0)) = -\I u$ holds   pointwise, whenever 
at $(x_0, u(x_0))$ the graph of $u$ can be touched from above and below by paraboloids $v$.
\end{definition}
\begin{remark}\label{rem:concave}
If $F$ is concave and $u$ is a viscosity solution of $F(D^2 u)=0$ in $B_1$ then 
\begin{equation}
\| u\|_{C^{2, \alpha}(B_{\frac12})}\le C\left(\| u\|_{L^\infty(B_1)}+|F(0)|\right), 
\end{equation}
where $0<\alpha<1$ and $C$ are universal constants, see Theorem 6.6 \cite{CC-Fully}.
If $F$ is convex or concave then for the viscosity solutions of $F(D^2u)=0$ we still have the estimate 
\[
\|u\|_{C^{1,1}(B_{1/11})}\le C\|u\|_{L^\infty(B_1)},
\]
(see (6.14) and Remark 1 on page 60 in \cite{CC-Fully}). 
Theorem \ref{thm:only-conc} is the only place where we require $F$ to be convex. 
\end{remark}

Under assumptions $\bf F1^\circ-F2^\circ$ the classical weak and strong 
comparison principles are valid for the viscosity solutions \cite{CC-Fully}. Moreover, we have 
the strong and Hopf's comparison principles.

\begin{lem}[Strong comparison principle]\label{lem:SCP}
Suppose $v\in C^2(D), w\in C^1(D)$, and $\na v\not \equiv 0$ in a bounded domain $D$.
Let  $F(D^2 v)\ge 0\ge F(D^2 w)$ in 
 $D\subset \R^n$ in viscosity sense,   $v\le w$, and $v, w$ are not identical,  
then
\begin{equation}
v< w \ \text{in} \ D.
\end{equation}
\end{lem}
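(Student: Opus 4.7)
The plan is to reduce the statement to the strong maximum principle for the Pucci extremal operator $\M^-$ from $\mathbf{F1}^\circ$. Set $\varphi := w - v \ge 0$. The core step is to show that $\varphi$ is a viscosity supersolution of the uniformly elliptic equation $\M^-(D^2 \varphi, \lambda, \Lambda) = 0$ in $D$. Once this is in hand, the Caffarelli--Cabré weak Harnack inequality for nonnegative supersolutions of Pucci equations (see Chapter 4 of \cite{CC-Fully}) yields the dichotomy $\varphi \equiv 0$ or $\varphi > 0$ in $D$; since $v \not\equiv w$, the first alternative is excluded and we conclude $v < w$, as desired.

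To verify the supersolution property, fix $x_0 \in D$ and let $\psi$ be a $C^2$ test function touching $\varphi$ from below at $x_0$. Because $v \in C^2(D)$, the sum $\psi + v$ is $C^2$ and touches $w$ from below at $x_0$. The viscosity hypothesis $F(D^2 w) \le 0$ then gives $F(D^2 \psi(x_0) + D^2 v(x_0)) \le 0$. The one-sided Pucci inequality that follows from $\mathbf{F1}^\circ$,
\begin{equation*}
F(A + B) - F(A) \ge \M^-(B, \lambda, \Lambda), \qquad A, B \in \S,
\end{equation*}
applied with $A = D^2 v(x_0)$ and $B = D^2 \psi(x_0)$, combined with the classical inequality $F(D^2 v(x_0)) \ge 0$, yields
\begin{equation*}
\M^-(D^2 \psi(x_0), \lambda, \Lambda) \le F(D^2 \psi(x_0) + D^2 v(x_0)) - F(D^2 v(x_0)) \le 0.
\end{equation*}
This is precisely the viscosity supersolution condition for $\M^-$, and the verification is complete.

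I do not foresee a serious technical obstacle. The $C^2$ regularity of $v$ is what makes the argument clean: it allows one to promote any $C^2$ test function $\psi$ for $\varphi$ to a $C^2$ test function $\psi + v$ for $w$, thereby bypassing the sup/inf-convolution or doubling-variable machinery that would be required if both $v$ and $w$ were merely continuous viscosity sub- and supersolutions. The strong maximum principle for Pucci extremal operators is by now standard and follows directly from the weak Harnack estimate in \cite{CC-Fully}. The hypothesis $\na v \not\equiv 0$ plays no role in the strict interior inequality itself; it is presumably recorded in the statement because it is needed for the companion Hopf boundary-point principle announced in the surrounding discussion, where a nontrivial normal derivative of the comparison function is required.
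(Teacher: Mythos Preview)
Your argument is correct. The reduction to the strong maximum principle for $\M^-$ via the test-function shift $\psi\mapsto \psi+v$ is exactly the advantage the hypothesis $v\in C^2$ buys, and the chain of inequalities
\[
\M^-(D^2\psi(x_0))\le F\bigl(D^2\psi(x_0)+D^2v(x_0)\bigr)-F\bigl(D^2v(x_0)\bigr)\le 0-0=0
\]
is the standard consequence of $\mathbf{F1}^\circ$ (Lemma~2.2 in \cite{CC-Fully}). The conclusion then follows from the strong maximum principle for $\M^-$ (Proposition~4.9 in \cite{CC-Fully}), which is indeed a direct corollary of the weak Harnack inequality for nonnegative supersolutions.

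The paper itself gives no proof: it simply refers to Theorem~3.1 of \cite{Giga}. That reference treats a considerably more general situation---operators that may depend on $x$, $u$, and $\nabla u$, and sub/supersolutions that are merely semicontinuous---where the gradient nondegeneracy condition $\nabla v\not\equiv 0$ genuinely enters (it guarantees the operator is ``proper'' along the relevant comparison and allows a doubling-of-variables argument to go through). Your observation that the hypothesis $\nabla v\not\equiv 0$ is superfluous in the present uniformly elliptic, $C^2$-subsolution setting is accurate: it is an artifact of quoting the more general Giga--Ohnuma statement verbatim rather than the minimal version needed here. What your approach buys is a short, self-contained proof tailored to the paper's structural assumptions; what the citation buys is a result that would survive weakening the regularity of $v$ or allowing lower-order dependence in $F$.
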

See Theorem 3.1 \cite{Giga}.

\begin{lem}[Hopf's comparison principle]\label{lem:HCP}
Let $B$ be a ball contained in $D$ and assume that $w \in C^1(D) , v \in C^2(D)$ and that $\na v \not = 0 $, in $B$. 
Let $v$ and $w$ be a viscosity subsolution and a supersolution of $F(D^2 u)=0$, respectively.
Moreover, suppose that $v <w$, in $B$, and that $v(x_0) = w(x_0)$, for some $x_0 \in \p B$. 
Then, $\na v(x_0) \not =\na w(x_0)$. 
\end{lem}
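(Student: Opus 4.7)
The plan is to adapt the classical Hopf exponential-barrier argument to the fully nonlinear setting, using Pucci's minimal operator as a linearised lower bound for $F$. Let $y$ denote the centre of $B$ and $r$ its radius, so that $x_0\in\p B_r(y)$, and set $A=B_r(y)\-\ol{B_{r/2}(y)}$. I would work with the standard barrier
\[
\phi(x)=e^{-\alpha|x-y|^2}-e^{-\alpha r^2},
\]
which vanishes on the outer sphere and is strictly positive in $B$. A direct computation gives
\[
D^2\phi=e^{-\alpha|x-y|^2}\bigl[-2\alpha I+4\alpha^2(x-y)\otimes(x-y)\bigr],
\]
which has one large positive eigenvalue in the radial direction and $n-1$ negative tangential eigenvalues. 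Choosing $\alpha$ sufficiently large, depending only on $n,\lambda,\Lambda,r$, ensures $\M^-(D^2\phi)\ge c_0>0$ throughout $A$.

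The next step is to promote $\e\phi$ to a subsolution perturbation of $v$. The uniform ellipticity $\bf F1^\circ$ combined with $F(0)=0$ yields the basic inequality $F(M+N)\ge F(M)+\M^-(N)$ for all symmetric matrices $M,N$; since $v\in C^2(D)$, the viscosity inequality $F(D^2 v)\ge 0$ holds classically, so
\[
F(D^2(v+\e\phi))\ge F(D^2 v)+\e\M^-(D^2\phi)\ge \e c_0>0\quad\text{in}\ A.
\]
On the outer sphere $\p B$, $\phi\equiv 0$, hence $v+\e\phi=v\le w$ with equality at $x_0$; on the inner sphere $\{|x-y|=r/2\}$, which is compactly contained in $B$, the strict inequality $v<w$ furnishes a uniform gap $w-v\ge\delta>0$, so $v+\e\phi\le w$ there once $\e\sup_A\phi<\delta$.

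I would then apply the weak comparison principle in $A$ to the strict subsolution $v+\e\phi$ and the supersolution $w$ of $F(D^2 u)=0$ to conclude $v+\e\phi\le w$ throughout $A$. Equality at the boundary point $x_0\in\p B$ forces $w-v-\e\phi$ to attain its minimum value $0$ at $x_0$, so the one-sided derivative along the inward radial direction $-\nu$ satisfies
\[
\p_\nu(w-v)(x_0)\le \e\,\p_\nu\phi(x_0)=-2\alpha r\,\e\,e^{-\alpha r^2}<0,
\]
where $\nu$ is the outward unit normal to $B$ at $x_0$. In particular $\na v(x_0)\ne\na w(x_0)$, which is the desired conclusion; the assumption $\na v\ne 0$ together with $v\in C^2$ ensures that $\p_\nu v(x_0)$ makes classical sense and aligns with the hypotheses of Lemma~\ref{lem:SCP} if one prefers to deduce the weak comparison from the strong one.

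The principal obstacle is the nonlinearity of $F$: unlike in the linear setting, $v+\e\phi$ is not automatically a subsolution and one cannot differentiate the equation to obtain a linear inequality for $\phi$. This is circumvented by the Pucci lower bound $F(M+N)\ge F(M)+\M^-(N)$, at the cost of a quantitative choice of $\alpha$ that keeps $\M^-(D^2\phi)$ strictly positive across the whole annulus. Once this adjustment is made, weak comparison in $A$ together with one-sided differentiation at the boundary minimum carry the remainder of the argument.
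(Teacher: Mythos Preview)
The paper does not give a proof of this lemma; it simply refers to Theorem~4.1 in \cite{Giga}. Your argument is a correct self-contained proof: the classical exponential barrier, pushed through the nonlinearity via the Pucci lower bound $F(M+N)\ge F(M)+\M^-(N)$, followed by weak comparison of the $C^2$ strict subsolution $v+\e\phi$ against the viscosity supersolution $w$ in the annulus. Two small remarks: the inequality $v\le w$ on the outer sphere $\p B$ (which you need as boundary data for the comparison) follows by continuity from $v<w$ in the open ball $B$ together with $v,w\in C(D)$; and the hypothesis $\na v\ne 0$ in $B$ is not actually used in your route---in \cite{Giga} it enters because the operators considered there may depend on the gradient and be singular or degenerate, which is not the case for $F(D^2 u)$ here.
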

See Theorem 4.1 \cite{Giga}.

One of the main tools in our analysis is the boundary Harnack principle.
As before,  we assume that $F$ is smooth, homogeneous of degree 1, uniformly elliptic with ellipticity constants $\lambda$ and 
$\Lambda$, and $F (0) = 0$. 
We use the following notation: 
$f(x'), x'\in B_1'\subset \R^{n-1}$ is Lipschitz continuous 
function with Lipschitz constant $M>1, f(0)=0$, 
$\Omega_r=B_r'\times [-rM, rM]\cap \{x_n>f(x')\}$, 
$\Delta_r=B_r'\times [-rM, rM]\cap \{x_n=f(x')\}$, 
$A=e_nM/2$.

Then we have the following Harnack principle, see \cite{Wang}. 
\begin{thm}
Assume $\bf F1^\circ-F2^\circ$ hold and $F$ is either concave or convex. Let $u, v$ be two nonnegative solution of 
$F(D^2 u)=0$ in $\Omega_1$
that equal 0 along $\Omega_1\setminus S$.
Suppose also that $u-\sigma v\ge 0$ in $\Omega_1$
for some $\sigma\ge 0$.
Then for some constant $C$, depending only on 
$\lambda, \Lambda, n$ and the Lipschitz character of $\Omega_1$, we have in 
$\Omega_{\frac12}$
\begin{equation}\label{eq:bHarnack}
C^{-1}\frac{u(A)-\sigma v(A)}{v(A)}\le
\frac{u-\sigma v}{v}
\le
C\frac{u(A)-\sigma v(A)}{v(A)}.
\end{equation}
\end{thm}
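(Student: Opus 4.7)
The plan is to follow the Caffarelli--Fabes--Mortola--Salsa iteration scheme, adapted to fully nonlinear operators via Pucci extremal inequalities. First I would use the elementary Pucci bound
\[
\M^{-}(A-B,\lambda,\Lambda) \le F(A) - F(B) \le \M^{+}(A-B,\lambda,\Lambda)
\]
together with the degree-one homogeneity $F(\sigma D^2 v) = \sigma F(D^2 v) = 0$ to conclude that the difference $w := u - \sigma v$ is a nonnegative viscosity solution of the two-sided Pucci inequality $\M^{-}(D^2 w) \le 0 \le \M^{+}(D^2 w)$ in $\Omega_1$, vanishing on $\Delta_1$; the same holds for $u$ and $v$ individually. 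The convexity or concavity of $F$ enters here to justify the Pucci comparison and to give access to the full Krylov--Safonov interior Harnack inequality for the resulting class.

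After this reduction I would proceed in three steps. \textbf{Step 1 (Carleson estimate).} For every nonnegative viscosity solution $h$ of the Pucci inequalities in $\Omega_1$ vanishing on $\Delta_1$, establish
\[
\sup_{\Omega_{1/2}} h \le C\, h(A),
\]
by combining a Harnack chain of non-tangential balls with a radial barrier of the form $|x-x_0|^{-\beta}$ (with $\beta$ large, constructed as a Pucci supersolution) to push the supremum down near $\Delta_1$. \textbf{Step 2 (Boundary growth for $v$).} Obtain $v(x) \ge c_0\, v(A)\, \dist(x,\Delta_1)^{\alpha}$ on $\Omega_{1/2}$ via comparison with a radial subsolution of $\M^{-}$ and a Harnack chain, giving quantitative control of $v$ from below throughout the domain.

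\textbf{Step 3 (Oscillation decay).} Set $M_r = \sup_{\Omega_r}(w/v)$ and $m_r = \inf_{\Omega_r}(w/v)$. The functions $w - m_r v$ and $M_r v - w$ are both nonnegative, satisfy the Pucci inequalities in $\Omega_r$, and vanish on $\Delta_r$. Since their sum at the non-tangential reference point $A_r$ equals $(M_r - m_r)v(A_r)$, at least one of them, say $w - m_r v$, satisfies
\[
(w - m_r v)(A_r) \ge \tfrac{1}{2}(M_r - m_r)\, v(A_r).
\]
A half-Harnack for Pucci supersolutions, together with the Carleson bound from Step~1 applied to the other function, propagates this pointwise largeness to a uniform lower bound $w - m_r v \ge c(M_r - m_r)\,v$ on $\Omega_{r/2}$, so that $M_{r/2} - m_{r/2} \le (1-c)(M_r - m_r)$. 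Iterating at dyadic scales gives Hölder continuity of $w/v$ up to $\Delta_1$, which delivers the two-sided bound asserted in the theorem after evaluating at the reference point $A$.

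The main obstacle is Step~3: the ratio $w/v$ does not itself satisfy any useful equation in the nonlinear setting, so one cannot merely apply Harnack to it. One must instead juggle Carleson-type control for subsolutions (upper bound) and a half-Harnack for supersolutions (lower bound) at every scale, and both of these depend on barrier constructions in a genuine Lipschitz domain, where no explicit Green's function is available. Compensating with radial Pucci sub/super-solutions glued through the Krylov--Safonov interior machinery is the technical heart of the proof, and it is exactly at this junction that the convexity or concavity of $F$ is indispensable.
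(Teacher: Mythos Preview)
The paper does not prove this statement: it is quoted as a known technical ingredient with the sentence ``Then we have the following Harnack principle, see \cite{Wang}'' immediately preceding it, so there is no argument in the text to compare yours against.

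Your outline is essentially the standard route, and it is the one taken in \cite{Wang} and \cite{Feldman}: reduce to the Pucci class, establish a Carleson estimate and a lower growth bound by barriers, then run the dyadic oscillation-decay iteration on $w/v$. One point deserves correction, however. You assert that convexity or concavity of $F$ is what ``justifies the Pucci comparison'' and ``gives access to the full Krylov--Safonov interior Harnack inequality,'' and you repeat at the end that convexity is ``indispensable'' at that junction. Neither claim is accurate. The two-sided bound $\M^{-}(A-B)\le F(A)-F(B)\le\M^{+}(A-B)$ is a consequence of uniform ellipticity ($\mathbf{F1}^\circ$) alone, and the Krylov--Safonov theory for functions satisfying $\M^{-}(D^2\cdot)\le 0\le\M^{+}(D^2\cdot)$ is likewise convexity-free. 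The genuine role of convexity or concavity in the cited references is Evans--Krylov regularity: it makes viscosity solutions of $F(D^2\cdot)=0$ classical ($C^{2,\alpha}$), so that the successive differences $u-\sigma v$, $w-m_r v$, $M_r v-w$ lie in the Pucci class pointwise and the barrier comparisons in your Steps~1--3 can be carried out without appealing to the viscosity-level subtraction machinery (Theorem~5.3 in \cite{CC-Fully}). This is a misattribution of where the hypothesis enters, not a gap in your argument.
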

Furthermore, as in \cite{C-1} (see also \cite{Wang} Section 2) one can show that 
the nonnegative  solutions in $\Omega_1$ are monotone in $\Omega_{\delta_0}$
for some universal $\delta_0$. We state this only in two spatial dimensions.

\begin{thm}\label{C-mon}
Let $w$ be a viscosity solution of $F(D^2w)=0, w\ge 0$ in $D=\{|x_1|\le 1, f(x_1)<x_2\le M\}$, $M=\|f\|_{C^{0, 1}}$.
Assume $\bf F1^\circ-F2^\circ$ hold and $F$ is either concave or convex. 
Then there is $\delta=\delta(M)$ such that 
\[
\p_{2} w\ge 0 \quad \mbox{in} \ D_\delta=\{|x_1|\le \delta, f(x_1)<x_2\le M\delta\}. 
\]
\end{thm}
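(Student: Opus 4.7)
I would follow Caffarelli's sliding argument \cite{C-1} for harmonic functions in Lipschitz domains, adapted to the fully nonlinear setting as in \cite{Wang} Section 2. For small $h>0$, consider the vertical translate $w_h(x_1,x_2):=w(x_1,x_2+h)$. Since $F$ is translation invariant with $F(0)=0$, $w_h$ is again a nonnegative viscosity solution of $F(D^2 w_h)=0$ in the shifted domain $D-he_2$, which contains $D_\delta$ for $\delta$ small enough. The monotonicity conclusion $\p_2 w\ge 0$ in $D_\delta$ thus reduces to establishing $w_h\ge w$ in $D_\delta$ uniformly in $h$, followed by the limit $h\to 0^+$ in the viscosity sense.

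The first ingredient is a Hopf-type boundary estimate: for $x\in D_{1/2}$ with vertical height $t:=x_2-f(x_1)>0$ above the graph, one has $w(x)\ge c\, t\, w(A)$ with $c=c(M,\lambda,\Lambda,n)>0$. This is obtained by sliding a standard interior $F$-subsolution barrier whose supporting cone opens inside $D$ with angle depending only on $M$, together with Lemmas \ref{lem:SCP}--\ref{lem:HCP} and a Harnack chain connecting the boundary strip to $A$. In particular, at matching points $w_h(x)-w(x)\ge c'\, h\, w(A)$, which at $x=A$ gives $w_h(A)/w(A)\ge 1+c'h$.

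The second ingredient is the boundary Harnack principle \eqref{eq:bHarnack} applied to the nonnegative solutions $w_h$ and $w$ on a suitable subdomain where both vanish on a common Lipschitz piece. With $\sigma$ taken slightly below $w_h(A)/w(A)$, \eqref{eq:bHarnack} yields $w_h-\sigma w\ge 0$, and the Hopf bound $\sigma\ge 1+c'h-\eta$ gives $w_h\ge(1+c'h-\eta)w\ge w$ in $D_\delta$ for $\delta=\delta(M)$ sufficiently small. Dividing by $h$ and passing to the viscosity limit $h\to 0^+$ then delivers $\p_2 w\ge 0$ in $D_\delta$.

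The main technical hurdle is that $w$ and $w_h$ do not vanish on the same portion of the boundary---indeed $w_h>0$ along the graph $\{x_2=f(x_1)\}$---so the boundary Harnack principle as stated does not apply directly to the pair $(w,w_h)$. I would circumvent this by working instead on the shifted subdomain $D\cap\{x_2>f(x_1)+h\}$, whose lower boundary is the translated Lipschitz graph on which $w_h$ vanishes, and absorbing the non-vanishing of $w$ there into the constants in \eqref{eq:bHarnack} via the Hopf lower bound of the first step. An alternative route is to work directly with the difference $w_h-w$, which satisfies the Pucci inequalities $\M^-(D^2(w_h-w))\le 0\le \M^+(D^2(w_h-w))$ in the viscosity sense, and to invoke the boundary Harnack principle for Pucci extremal operators from \cite{Wang}; this latter route also explains why the convexity/concavity hypothesis on $F$ is used, since it is precisely what makes the boundary Harnack principle above applicable.
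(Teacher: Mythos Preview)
The paper does not give a self-contained proof of this theorem: it simply quotes the result from \cite{Wang} (which in turn adapts \cite{C-1}) and adds the observation that concavity of $F$ enters only through the local $C^{2,\alpha}$ regularity of solutions, whereas the arguments in \cite{Wang}, Lemmata~2.1--2.5, need only $C^{1,\alpha}$ regularity---so by Remark~\ref{rem:concave} the conclusion persists for convex $F$ as well (see also \cite{Feldman}). Your sketch is therefore an attempt to reconstruct the argument of the cited references rather than to match a proof in the paper.

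That said, your sketch has two concrete gaps. First, the ``Hopf-type'' lower bound $w(x)\ge c\,t\,w(A)$ with \emph{linear} dependence on the height $t=x_2-f(x_1)$ is not what a cone barrier delivers in an $M$-Lipschitz graph domain with $M>1$: the interior cone at a boundary point has opening $2\arctan(1/M)$, and the associated barrier is homogeneous of some degree $\beta>1$, so one only gets $w(x)\ge c\,t^{\beta}w(A)$. This is a minor point (the conclusion $w_h(A)/w(A)\ge 1+c'h$ at the interior reference point still survives), but it shows the first step needs more care.

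Second, and more seriously, the boundary Harnack inequality \eqref{eq:bHarnack} does not let you pass from $w_h(A)/w(A)\ge 1+c'h$ to $w_h\ge w$ in a region $D_\delta$ with $\delta$ \emph{independent of $h$}. Applying \eqref{eq:bHarnack} with $\sigma=0$ yields only $w_h/w\ge C^{-1}(1+c'h)$, which is useless for small $h$ since $C>1$; applying it with $\sigma$ close to $1$ presupposes $w_h-\sigma w\ge 0$ on the large domain, which is precisely what is at stake. Even invoking H\"older continuity of the quotient gives $w_h/w\ge (1+c'h)(1-C\delta^\alpha)$, forcing $\delta\lesssim h^{1/\alpha}$, hence no uniform neighbourhood. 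The actual argument in \cite{C-1} and \cite{Wang} is not a single application of \eqref{eq:bHarnack} but a sequence of lemmata (interior gain via Harnack for $w_h-w$, a Carleson-type estimate controlling $w$ near the graph by its value at the reference point, and then a comparison/iteration that propagates the interior inequality down to the boundary at a fixed scale). Your two proposed workarounds for the mismatched zero sets do not address this loss-of-constant issue; in particular the ``alternative route'' through Pucci inequalities for $w_h-w$ still requires nonnegativity of $w_h-w$ before any boundary Harnack can be invoked. Finally, the role of convexity/concavity is not, as you write, to make \eqref{eq:bHarnack} applicable, but to guarantee the interior $C^{1,\alpha}$ (indeed $C^{1,1}$) regularity that the lemmata of \cite{Wang} require; this is exactly the remark the paper makes after the statement.
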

In \cite{Wang} Theorem \ref{C-mon} is stated for concave operator $F$, however the concavity 
is needed only to assure that locally the viscosity solutions of the homogeneous equation 
are locally $C^{2, \alpha}$ regular, see Remark 1.2 in \cite{Wang}. Since in the proofs of Lemmata 2.1-2.5 in \cite{Wang} one needs only 
$C^{1, \alpha}$ regularity of the solutions then in view of Remark \ref{rem:concave} we see that 
Theorem \ref{C-mon} continues to hold for convex $F$, see \cite{Feldman}.

Finally, we give a characterization of homogeneity, see Theorem 2.1.1 \cite{Tolksdorf} for a proof. 
\begin{lem}\label{lem:hom-Tolk}
Let $w\in L^\infty(\K_1)\- \{0\}$ where $\K_1=\K(0, 1)$ such that 
\begin{equation}\label{eq:hom-Tolk}
C_Rw(Rx)=w(x), \quad R\in(0,1), \  \  C_R=\frac1{\sup_{\K_R}w}.
\end{equation}
Then there is a $\kappa\in [0, \infty)$ such that 
\begin{equation}
w(x)=|x|^\kappa w\left(\frac x{|x|}\right).
\end{equation}

\end{lem}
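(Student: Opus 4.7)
The plan is to exploit the scaling invariance \eqref{eq:hom-Tolk} to show that $R \mapsto C_R$ is a multiplicative function on $(0,1]$, and then to invoke the classical fact that a monotone multiplicative function is a power. First I would establish multiplicativity. Fix $R_1, R_2 \in (0,1)$ and any $x \in \K_1$. Applying \eqref{eq:hom-Tolk} with scaling factor $R_1$ gives $w(R_1 x) = w(x)/C_{R_1}$, and then applying it with factor $R_2$ at the point $R_1 x \in \K_1$ gives $w(R_1 R_2 x) = w(R_1 x)/C_{R_2} = w(x)/(C_{R_1} C_{R_2})$. On the other hand, \eqref{eq:hom-Tolk} applied directly with factor $R_1 R_2 \in (0,1)$ gives $w(R_1 R_2 x) = w(x)/C_{R_1 R_2}$. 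Since $w \not\equiv 0$, choosing $x$ with $w(x) \neq 0$ yields $C_{R_1 R_2} = C_{R_1} C_{R_2}$.

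Next I would upgrade this algebraic relation to a power law. Because $R_1 < R_2$ implies $\K_{R_1} \subset \K_{R_2}$ (as $\K$ is a cone) and $w$ is non-negative in the intended application, $\sup_{\K_{R_1}} w \le \sup_{\K_{R_2}} w$, so $R \mapsto C_R$ is non-increasing. Setting $g(t) := \log C_{e^{-t}}$ for $t \ge 0$ turns multiplicativity into the additive Cauchy equation $g(s+t) = g(s) + g(t)$ and preserves monotonicity. The classical fact that a monotone solution of Cauchy's equation is linear then yields $g(t) = \kappa t$ for some $\kappa \in [0,\infty)$, i.e.\ $C_R = R^{-\kappa}$.

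Substituting back into \eqref{eq:hom-Tolk} gives $w(Rx) = R^\kappa w(x)$ for all $R \in (0,1]$ and $x \in \K_1$. For any $x \in \K_1 \setminus \{0\}$, setting $R = |x|$ and $\xi = x/|x| \in \K \cap \p B_1$ (using that $\K$ is invariant under positive dilations) produces
\[
w(x) = w(|x|\xi) = |x|^\kappa w(\xi) = |x|^\kappa w\!\left(\frac{x}{|x|}\right),
\]
as desired.

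The main subtlety is the passage from the Cauchy equation to linearity, which requires the monotonicity of $C_R$; this in turn hinges on $w \ge 0$ so that $\sup_{\K_R} w$ is monotone in $R$. This positivity is implicit in the application of the lemma to blow-ups of non-negative solutions and should be stated explicitly, since without it only the bare algebraic relation $C_{R_1 R_2} = C_{R_1} C_{R_2}$ is available and pathological non-measurable solutions to Cauchy's equation could not be excluded.
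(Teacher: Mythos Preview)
The paper does not give a proof of this lemma at all; it simply records the statement and refers the reader to Theorem~2.1.1 in \cite{Tolksdorf}. Your argument is correct and is in fact the standard one: derive multiplicativity $C_{R_1R_2}=C_{R_1}C_{R_2}$ from iterating the scaling relation, pass to the additive Cauchy equation via logarithms, and use monotonicity of $R\mapsto \sup_{\K_R}w$ to force the linear solution $C_R=R^{-\kappa}$.

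Two minor remarks. First, the inequality $\sup_{\K_{R_1}}w\le\sup_{\K_{R_2}}w$ for $R_1<R_2$ holds simply because $\K_{R_1}\subset\K_{R_2}$, irrespective of the sign of $w$; the place where positivity genuinely enters is in ensuring $\sup_{\K_R}w>0$ so that $C_R>0$ and $\log C_R$ is defined. You identify this correctly at the end, and indeed in the paper's only use of the lemma (Theorem~\ref{thm:v-exist}) the function $v^*$ is nonnegative with $\|v^*\|_{L^\infty(\K_1)}=1$, so the hypothesis is met. Second, in the last step you evaluate at $\xi=x/|x|$, which lies on $\partial B_1$ and hence on $\overline{\K_1}\setminus\K_1$; strictly speaking the scaling relation is stated for $R\in(0,1)$, so one should either note that the identity extends to $R=1$ trivially (where $C_1=1$), or argue by continuity in $R$. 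This is harmless.
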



\vs
\section{Existence and nongeneracy}\label{sec:3}
In this section we prove the existence of viscosity solutions and the non-degeneracy of maximal solutions. 

\subsection{Existence of viscosity solutions}

\begin{definition}
A continuous function $u$ is said to be a viscosity subsolution  of 
$F(D^2u) = -\I u$, 
if the inequality $F(D^2v(x_0)) \ge -\I u$ holds  pointwise, whenever 
at $(x_0, u(x_0))$ the graph of $u$ can be touched from below by a paraboloid $v$.
Moreover, $u$ is said to be a strict subsolution if the inequality above is strict. 
\end{definition}

\begin{definition}
A viscosity solution $u$ of $F(D^2 u)=-\I u$ is said to be maximal in $D$ if 
for every strong subsolution $v$ satisfying $v\le u$ on 
$\partial D'$ for some  subdomain $D'\subset D$ we have $v\le u$ in $D'$. 
\end{definition}

\begin{thm}
Assume $\bf F1^\circ-F2^\circ$ hold. Let $D$ be a bounded $C^{2, \alpha}$  domain and $g\in C^{2, \alpha}(\overline D)$.
There exists a viscosity solution $u$ to  
\begin{equation}
\left\{
\begin{array}{lll}
F(D^2 u) =-\I u \quad &\mbox{in}\ D, \\
u=g &\mbox{on}\ \partial D, 
\end{array}
\right.
\end{equation} 
such that $u\in W^{2, p}(D)$ for every $p\ge1$.  
\end{thm}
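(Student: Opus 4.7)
The plan is to construct $u$ by a vanishing penalization of the discontinuous right-hand side. Fix a smooth nondecreasing family $\beta_\e\in C^\infty(\R)$ with $\beta_\e\equiv 0$ on $(-\infty, 0]$ and $\beta_\e\equiv 1$ on $[\e, \infty)$, and consider the penalized problem
\begin{equation*}
F(D^2 u_\e)=-\beta_\e(u_\e)\ \mbox{in}\ D,\qquad u_\e=g\ \mbox{on}\ \p D.
\end{equation*}
A continuous viscosity solution $u_\e$ attaining the boundary data would be obtained by Perron's method: global quadratic barriers built from $\|g\|_{C^{2,\alpha}}$ and the diameter of $D$ supply ordered sub- and supersolutions; the $C^{2,\alpha}$ regularity of $\p D$ produces local barriers at every boundary point; and the monotonicity of $\beta_\e$ (so that the zero-th order term sits on the ``good'' side) reduces the Perron scheme for $F(D^2 v)=f(x,v)$ to a direct application of the weak comparison principle for $F$.

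Next I would establish $\e$-uniform estimates. Since $|\beta_\e(u_\e)|\le 1$, the right-hand side is bounded in $L^\infty(D)$ uniformly in $\e$, and the weak comparison principle against the quadratic barriers used above bounds $\|u_\e\|_{L^\infty(D)}$ uniformly in $\e$. Under $\mathbf{F1}^\circ$--$\mathbf{F2}^\circ$ together with the convexity of $F$ (cf. Remark~\ref{rem:concave}), Caffarelli's interior and boundary $W^{2,p}$ estimates for viscosity solutions of fully nonlinear uniformly elliptic equations with $L^\infty$ right-hand side yield
\begin{equation*}
\|u_\e\|_{W^{2,p}(D)}\le C_p\bigl(\|g\|_{C^{2,\alpha}(\ol D)}+1\bigr),\qquad 1\le p<\infty,
\end{equation*}
uniformly in $\e$.

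I would then extract a subsequence $u_{\e_k}\rightharpoonup u$ weakly in $W^{2,p}(D)$ and strongly in $C^{1,\alpha}(\ol D)$ for any $\alpha<1-n/p$, so that $u=g$ on $\p D$ and, by weak lower semicontinuity, $u\in W^{2,p}(D)$ for every $p\ge 1$. The substantive point is then to verify that $u$ is a viscosity solution of $F(D^2 u)=-\I{u}$. On the open set $\{u>0\}$ the uniform convergence $u_{\e_k}\to u$ forces $\beta_{\e_k}(u_{\e_k})\to 1$ locally uniformly, and symmetrically $\beta_{\e_k}(u_{\e_k})\to 0$ on $\{u<0\}$; the classical stability of viscosity solutions under locally uniform convergence of both solutions and right-hand sides then transfers the equation to the limit on $\{u\ne 0\}$.

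The main difficulty is the behaviour across the free boundary $\p\{u>0\}$, where the right-hand side jumps. To handle it I would exploit the $W^{2,p}$ regularity: at almost every Lebesgue point of $\{u=0\}$ one has $D^2 u=0$ (a standard consequence of the Lebesgue-point description of second derivatives of Sobolev functions on level sets), so combined with $F(0)=0$ the pointwise equation $F(D^2 u)=-\I{u}$ holds almost everywhere in $D$. Invoking the well-known equivalence between the a.e.\ and viscosity formulations for $W^{2,p}$ solutions of convex fully nonlinear elliptic equations (Caffarelli--Cabr\'e), one finally concludes that $u$ is a viscosity solution in the sense of Section~\ref{sec:2}. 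I expect this reconciliation of the two notions of solution across $\p\{u>0\}$, which genuinely uses both the convexity of $F$ and the strong $C^{1,\alpha}$ compactness of the family $\{u_\e\}$, to be the crux of the argument.
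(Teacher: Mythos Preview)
Your proposal is correct and follows the same penalization scheme as the paper: solve $F(D^2u_\e)=-\beta_\e(u_\e)$, use the uniform $W^{2,p}$ bounds coming from $|\beta_\e|\le C$, pass to the limit, and close the argument across $\{u=0\}$ via the Sobolev fact that $D^2u=0$ a.e.\ there together with $F(0)=0$.

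The one substantive difference is the choice of penalizer. You take $\beta_\e\le\I t$ (approximating the Heaviside from below) and pass to a subsequential limit; the paper instead takes $\beta_\e\ge\I t$ with $\beta_{\e'}\le\beta_\e$ for $\e'<\e$. With the paper's choice, any subsolution of the target problem is also a subsolution of the $\e$-problem, so each $u_\e$ dominates all subsolutions; moreover the family $\{u_\e\}$ is monotone in $\e$, and the limit $u=\lim_{\e\to0}u_\e$ exists without passing to subsequences and is automatically the \emph{maximal} solution. This maximality is precisely what is used in the subsequent non-degeneracy result (Theorem~\ref{thm:nondeg}). Your argument produces \emph{a} solution, which is all the statement asks for, but does not single out the maximal one; if you later need non-degeneracy you would have to revisit the construction.
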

\begin{proof}

\medskip

We use a standard penalization argument \cite{Friedman}. 
Let $\beta_\e(t), t\in \R$ be a family of $C^\infty$ 
functions 
such that 
\begin{equation}\label{eq:beta-0}
\left\{
\begin{array}{lll}
 \beta_\e(t)\ge \I t\quad &\mbox{on}\ \R, \\
\beta_{\e'}(t)\le \beta_{\e}(t)& \mbox{if}\  \e'<\e,\\
\lim_{\e\to 0}\beta_\e(t)= \I t &t\in \R.\\
\end{array}
\right.
\end{equation}
Given $\e>0$,  there is a solution $v$ of 
\begin{equation}\label{eq:e}
\left\{
\begin{array}{lll}
F(D^2 v)=-\beta_\e(v) \quad &\mbox{in}\ D, \\
v=g &\mbox{on}\ \partial D.
\end{array}
\right.
\end{equation} 
Observe that Perron's method implies that for 
every $\e>0$ the maximal solution $u_\e$ exists. Furthermore, 
since $\beta_\e$ are uniformly bounded then $\| v\|_{W^{2, p}(D)}\le C$ 
with some $C$ independent of $\e$, see  Theorem 7.1 \cite{CC-Fully} and Remark \ref{rem:concave} above.  

If $v$ is a subsolution, i.e. 
$F(D^2 v)\ge -\I v$ then by \eqref{eq:beta-0} we also have  $F(D^2 v)\ge -\beta_\e(v)$.
Thus for $\e>\e'$  (using \eqref{eq:beta-0}) we get
\[
F(D^2 u_{\e'})=-\beta_{\e'}(u_{\e'})\ge -\beta_{\e}(u_{\e'}).
\]
This shows that $u_{\e'}$ is a subsolution to \eqref{eq:e}.
Since $u_\e$ is the maximal solution then we have 
\[
v\le u_\e, \quad u_{\e'}\le u_\e.
\]
Thus $u(x)=\lim_{\e\to 0}u_{\e}$ in $W^{2, p}$.
From the uniform convergence it follows that $u(z)>0$ implying that 
$u_{\e}>0$ in some neighborhood of $z$. Thus 
$F(D^2 u)=-1$ near $z$.
Since $D^2 u=0$ a.e. on $\{u=0\}$ it follows that $F(D^2 u)=-\I u$. 
\end{proof}


\subsection{Non-degeneracy}

\begin{thm}\label{thm:nondeg}
Assume $\bf F1^\circ-F2^\circ$ hold. Let $u$ be the maximal solution,  then 
there is a universal constant $c_{n, \gamma}$, 
depending only on dimension $n$ and $\gamma=\frac{\Lambda(n-1)}{\lambda}-1$, such that 
\[
\inf_{B_r(x_0)} u>-c_{n, \gamma}r^2
\]
implies that $u(x_0)>0$. 
\end{thm}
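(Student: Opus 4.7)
My plan is to argue by contradiction, using the maximality of $u$. Suppose $u(x_0)\le 0$ while $\inf_{B_r(x_0)}u>-c r^2$ for a constant $c$ to be fixed. The strategy is to exhibit a viscosity subsolution $V$ of the obstacle equation $F(D^2 V)=-\I V$ in $B_r(x_0)$ with $V(x_0)>0$ and $V\le u$ on $\p B_r(x_0)$. Maximality of $u$ then yields $V\le u$ throughout $B_r(x_0)$, and in particular $u(x_0)\ge V(x_0)>0$, contradicting the hypothesis.

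The barrier $V$ will be radial on $B_r(x_0)$ and built by gluing two pieces across the sphere $\{\rho=r/2\}$, where $\rho=|x-x_0|$. On the inner ball I take a concave paraboloid $V_{\text{in}}(\rho)=M+\tfrac{1}{2F(I)}(r^2/4-\rho^2)$, which satisfies $F(D^2 V_{\text{in}})=-1$ and, provided $M>0$, is positive throughout. On the outer annulus I take the Pucci-extremal profile $V_{\text{out}}(\rho)=B+A\rho^{-\gamma}$: computing the Hessian eigenvalues of $\rho^{-\gamma}$ (one radial, $n-1$ tangential) gives
\[
\M^-(D^2(\rho^{-\gamma}))=\gamma\rho^{-\gamma-2}\bigl[\lambda(\gamma+1)-\Lambda(n-1)\bigr],
\]
which vanishes precisely because $\gamma=\Lambda(n-1)/\lambda-1$, so $V_{\text{out}}$ is $\M^-$-harmonic and therefore $F$-subharmonic. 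The three constants $A,B,M$ are fixed by the two $C^1$-matching conditions at $\rho=r/2$ together with the Dirichlet condition $V(r)=-c r^2$.

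The crucial step is verifying that $V$ is a viscosity subsolution at the interface $\{\rho=r/2\}$; in the interior of each piece the condition is immediate, and the origin is a smooth point of the paraboloid. For any $C^2$ test function $\phi\ge V$ with $\phi(p)=V(p)$ at an interface point $p$, the $C^1$-matching forces $\na\phi(p)=\na V(p)$, while touching from above on each side yields $D^2\phi(p)\ge\max\bigl(D^2 V_{\text{in}}(p),D^2 V_{\text{out}}(p)\bigr)$ in the Loewner order. The radial direction of this supremum picks up $V''_{\text{out}}(r/2)>0$ and the tangential directions pick up $V'(r/2)/(r/2)<0$; applying $\M^-$ and invoking the defining identity $\lambda(\gamma+1)=\Lambda(n-1)$ forces the radial and tangential contributions to cancel exactly. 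Hence $F(D^2\phi(p))\ge\M^-(\cdot)=0\ge -1$, which is the subsolution condition at a point where $V=M>0$.

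It remains to tune $c$. Solving the matching and boundary equations expresses $M$, and therefore $V(x_0)=M+r^2/(8F(I))$, as explicit multiples of $r^2$ involving only $\gamma$ and $F(I)$. Choosing, for example,
\[
c_{n,\gamma}=\frac{2^{\gamma}-1}{2^{\gamma+2}\gamma F(I)},
\]
keeps $M>0$ and forces $V(x_0)>0$; since $F(I)\in[n\lambda,n\Lambda]$ by ellipticity, the resulting constant depends only on $n$, $\lambda$, $\Lambda$, i.e.\ on $n$ and $\gamma$. The only genuine obstacle is the viscosity verification at the $C^1$-interface, where the specific exponent $\gamma=\Lambda(n-1)/\lambda-1$ is indispensable for the cancellation; the rest is bookkeeping with the parameters of the paraboloid and the power function.
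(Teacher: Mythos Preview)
Your approach is essentially the paper's: both build a radial $C^1$ barrier that is a concave paraboloid on an inner ball and the Pucci-extremal profile $A\rho^{-\gamma}+B$ on the surrounding annulus, then invoke maximality to force $u(x_0)\ge V(x_0)>0$. You work directly at scale $r$ and give a more careful viscosity check at the $C^1$ interface (which the paper glosses over); the only caveat is that with your chosen $c_{n,\gamma}$ one actually gets $M=0$ rather than $M>0$ (harmless, since $V(x_0)=r^2/(8F(I))>0$ anyway), and your formula degenerates when $\gamma=0$ (the case $n=2$, $\lambda=\Lambda$), where the paper replaces $\rho^{-\gamma}$ by $-\log\rho$.
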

\begin{proof}
Let us consider 
\[
b(x)=
\left\{
\begin{array}{lll}
C(1-|x|^2) & \mbox{if}\ |x|\le 1, \\
\phi(x)-\phi(1) & \mbox{if}\ |x|> 1, 
\end{array}
\right.
\]
where 
\[
\phi(x)=
\left\{
\begin{array}{lll}
-\log |x| & \mbox{if}\ n=2, \\
\frac1{\gamma}{|x|^{- \gamma}} & \mbox{if}\ n\ge 3, 
\end{array}
\right.
\]
and the constant $C$ is chosen so that $b(x)$ is $C^1$ regular. 
It is straightforward to compute $D^2 b$ and thus  
\[
F(D^2 b)=
\left\{
\begin{array}{lll}
-2C F(\delta_{ij}) & \mbox{if}\ |x|\le 1,\\
-\frac1{|x|^{\gamma+2}} F(\delta_{ij}-(\gamma+2)\frac{x_ix_j}{|x|^2})& \mbox{if}\ |x|>1.
\end{array}
\right.
\]
From the ellipticity \eqref{eq:ellip} we get that 

\[
F(\delta_{ij}-(\gamma+2)\frac{x_ix_j}{|x|^2})\le 
\mathcal M^+ (\delta_{ij}-(\gamma+2)\frac{x_ix_j}{|x|^2})=0, \quad |x|>1.
\]
Hence 
\[
F(D^2 b)\ge -\frac1{|x|^{\gamma+2}} \mathcal M^+(\delta_{ij}-(\gamma+2)\frac{x_ix_j}{|x|^2})=0, \quad  |x|>1.
\]
Consequently, we see that  $\hat b(x)=\frac{ b(x)}{2C F(\delta_{ij})}$ is a subsolution. 

Given  $r$, choose $\rho$ so that $\frac 2\rho=r$. 
Then 
for $|x|>\frac1\rho$ we have 
\[
\frac1{\rho^2} \hat b({\rho}x)=\frac1{\rho^{\gamma+2} \gamma} \left[ \frac1{|x|^{\gamma}}-\rho^{\gamma}\right], 
\]
and consequently 
\begin{eqnarray*}
\frac{\hat b(r)}{\rho^2}
&=&
\frac1{\rho^2} \hat b(\frac2{\rho})=-\left(1-\frac1{2^{\gamma}}\right)\frac1{\rho^2\gamma}\\
&=&
-\left(1-\frac1{2^{n-2}}\right)r^2\frac1{4\gamma}=:-c_{n, \gamma} r^2.
\end{eqnarray*}
Thus $u(0)\ge \hat b(0)>0$.

\end{proof}
\vs
\section{Dichotomy}\label{sec:4}
In order to formulate the main result of this section we first 
introduce the notion of rank-2 flatness. 
Let $P_2$ be the set of all homogeneous normalized polynomials of degree two, i.e.
\begin{equation}\label{poliu7u65}
P_2:=\left\{p(x)=\sum 
a_{ij}x_ix_j, {\mbox{ for any }}
x\in \R^n, {\mbox{ with }} \|p\|_{L^\infty(B_1)}=1 \right\},
\end{equation}
where $a_{ij}$ is a symmetric $n\times n$ matrix. For given~$p\in P_2$
and~$x_0\in\R^n$, we set~$p_{x_0}(x):=p(x-x_0)$ and consider the 
zero level set of translated polynomial $p$
\begin{equation}\label{spx}
S(p, x_0):=\{x\in \R^n : p_{x_0}(x)=0\}.\end{equation}
By definition $S(p, x_0)$ is a cone with vertex at~$x_0$. 

\begin{definition}\label{def1}
Let~$\delta>0$,~$R>0$ and~$x_0\in \fb u$.
We say that~$\fb u$ is $(\delta, R)$-rank-2 flat at~$x_0$
if, for every~$r\in (0, R]$, 
there exists~$p\in P_2$ such that
\[
\HD\Big(\fb u\cap B_r(x_0), S (p, {x_0})\cap B_r(x_0)\Big)< \delta\, r.
\]\end{definition}
Here $\HD$ denotes the Hausdorff distance
defined as follows
\begin{equation}\label{defHD}
\HD(A, B):=\max\left\{\sup\limits_{a\in A}\dist(a, B),\; 
\sup\limits_{b\in B}\dist(b, A)\right\}.
\end{equation}
Given~$r>0$, $x_0\in\partial\{u>0\}$ and~$p\in P_2$, we let
\begin{equation}\label{defflat:BIS} h_{\rm{min}}(r,x_0,p):=
\HD\Big(\fb u\cap B_r(x_0), S(p, x_0)\cap B_r(x_0)\Big).
\end{equation}
Then, we define the rank-2
flatness at level $r>0$ of $\fb u$ at~$x_0$ as follows.

\begin{definition}\label{def:flat}
Let~$\delta>0$,~$r>0$ and~$x_0\in\partial\{u>0\}$.
We say that $\fb u$ is $\delta$-rank-2 flat at level $r$ at~$x_0$
if~$h(r, x_0)<\delta r$, where 
\begin{equation}\label{flatdef}
h(r, x_0):=\inf_{p\in P_2}h_{\rm{min}}(r,x_0, p).
\end{equation}
\end{definition}

In view of Definitions~\ref{def1} and~\ref{def:flat}, we can say that~$
\fb u$ is $(\delta, R)$-rank-2 flat at~$x_0\in\partial\{u>0\}$
if and only if, for every~$r\in (0, R]$, it is 
$\delta$-rank-2 flat at level $r$ at~$x_0$.

\begin{thm}\label{growth}
Let $n\ge 2$ and $u$ be a viscosity solution of ~\eqref{eq:F}.
Let~$D\subset\Omega$, $\delta>0$ and let~$x_0\in \fb u\cap D$
such that~$|\na u(x_0)|=0$ and~$\fb u$
is not~$\delta$-rank-2 flat at~$x_0$ at any level~$r>0$.
Then, $u$ has at most quadratic growth at~$x_0$, bounded from above
in dependence on~$\delta$.
\end{thm}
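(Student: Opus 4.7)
The plan is to argue by contradiction via a blow-up scheme. Suppose no such constant $C(\delta)$ exists; then $\limsup_{r\to 0}S(r)/r^2=\infty$, where $S(r):=\sup_{B_r(x_0)}|u|$. Set $R_0:=\tfrac12\dist(x_0,\p\Omega)$ and, for a sequence $C_k\to\infty$, choose the quasi-extremal scale $r_k:=\sup\{r\in(0,R_0]:S(r)/r^2\ge C_k\}$. Boundedness of $u$ forces $r_k\to 0$, while by construction $S(r_k)/r_k^2\ge C_k$ and $S(\rho)\le (S(r_k)/r_k^2)\rho^2$ for all $\rho\in[r_k,R_0]$. Rescaling
\[
v_k(y):=\frac{u(x_0+r_k y)}{S(r_k)},\qquad y\in B_{R_0/r_k},
\]
the degree-one homogeneity of $F$ (assumption $\bf F2^\circ$) yields
\[
F(D^2 v_k)=-\frac{r_k^2}{S(r_k)}\chi_{\{v_k>0\}},
\]
with $r_k^2/S(r_k)\le C_k^{-1}\to 0$, and $\|v_k\|_{L^\infty(B_1)}=1$ together with $\|v_k\|_{L^\infty(B_R)}\le R^2$ for all $1\le R\le R_0/r_k$.

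The uniform bound on the right-hand side together with the interior $C^{1,\alpha}$ estimates of Remark~\ref{rem:concave} give precompactness of $\{v_k\}$ in $C^{1,\alpha'}_{\rm loc}(\R^n)$ for any $\alpha'<\alpha$. Passing to a subsequence, $v_k\to v_\infty$ in $C^{1,\alpha'}_{\rm loc}$, and by stability of viscosity solutions $v_\infty$ is an entire viscosity solution of $F(D^2 v_\infty)=0$ with $v_\infty(0)=0$, $\na v_\infty(0)=0$, and $\|v_\infty\|_{L^\infty(B_R)}\le R^2$. To identify $v_\infty$ I would apply the Evans--Krylov $C^{2,\alpha}$ theorem (valid because $F$ is convex) to the dilations $w_R(y):=v_\infty(Ry)/R^2$, each a uniformly $L^\infty$-bounded solution of the same equation on $B_2$. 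Unwinding the scaling yields $[D^2 v_\infty]_{C^\alpha(B_R)}=O(R^{-\alpha})$, so $D^2 v_\infty\equiv A$ on $\R^n$ for some constant symmetric matrix $A$, and $v_\infty(y)=P(y):=\tfrac12\l Ay,y\r$. Since $\|v_\infty\|_{L^\infty(B_1)}=1$, $P\not\equiv 0$; and the uniform ellipticity of $F$ with $F(A)=F(0)=0$ forces $A$ to have at least one positive and one negative eigenvalue, so $p_0:=P/\|P\|_{L^\infty(B_1)}\in P_2$ and $\{P=0\}$ is a genuine quadratic cone across which $P$ changes sign.

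The last step promotes the uniform convergence $v_k\to P$ on $\overline{B_1}$ to Hausdorff convergence of the free boundaries. For the inclusion $\fb{v_k}$ into a $\delta/2$-neighborhood of $\{P=0\}$: on the compact set $\{y\in\overline{B_1}:\dist(y,\{P=0\})\ge\delta/2\}$ the polynomial $|P|$ is bounded below by some $c_\delta>0$, hence $|v_k|\ge c_\delta/2$ there for $k$ large and no free boundary points can lie in that set. For the reverse inclusion: given $y_0\in\{P=0\}\cap B_1$, the indefiniteness of $A$ implies that $P$ takes both strictly positive and strictly negative values in every ball $B_\rho(y_0)$ (checked directly in an eigenbasis), so $v_k$ inherits the sign change for $k$ large, forcing $\fb{v_k}\cap B_{\delta/2}(y_0)\ne\emptyset$. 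Combining and using the degree-two homogeneity of $P$ to undo the rescaling,
\[
\HD\bigl(\fb u\cap B_{r_k}(x_0),\,S(p_0,x_0)\cap B_{r_k}(x_0)\bigr)<\delta\,r_k
\]
for all sufficiently large $k$, contradicting the hypothesis that $\fb u$ fails to be $\delta$-rank-2 flat at $x_0$ at every level.

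The main obstacle is the Liouville-type identification of the blow-up limit in the second paragraph, which is what necessitates the convexity of $F$ through Evans--Krylov; alternative routes (e.g.\ via linearized equations for second differences) look less clean in this fully nonlinear setting. The quasi-extremal selection of $r_k$ is routine but essential for the quadratic growth bound $\|v_k\|_{L^\infty(B_R)}\le R^2$, without which the blow-up limit could grow too fast to be classified. Finally, the comparison of free boundaries in the third paragraph does \emph{not} rely on any non-degeneracy statement for $v_k$ (Theorem~\ref{thm:nondeg} is available but not required here), because the sign-change of the limiting polynomial $P$ already passes through uniform convergence to $v_k$.
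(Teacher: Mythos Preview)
Your argument is correct and follows the same blow-up/Liouville/flatness-contradiction scheme as the paper, but with a different scale-selection mechanism: you choose a quasi-extremal radius $r_k=\sup\{r:S(r)/r^2\ge C_k\}$ for a single solution $u$, whereas the paper works at dyadic scales $r_k=2^{-k}$ and proves the iterative inequality of Proposition~\ref{zimbo},
\[
M(r_{k+1})\le \max\Bigl(Cr_k^2,\ \tfrac{1}{4}M(r_k),\ \dots,\ \tfrac{1}{2^{2(k+1)}}M(r_0)\Bigr),
\]
by contradiction over a \emph{sequence} of solutions $u_j$ with $\sup|u_j|\le 1$. Your route is more direct, but as written it only yields $\limsup_{r\to0}S(r)/r^2<\infty$ for the fixed $u$; to obtain a bound $C(\delta,n,\lambda,\Lambda)$ uniform over the class (which is what ``in dependence on $\delta$'' is meant to convey) you would still need to run the compactness over a varying family of solutions, as the paper does. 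On the other hand, you supply details the paper leaves implicit: the paper invokes ``Liouville's theorem'' without further comment and passes the non-flatness condition $h(0,1)>\delta$ to the limit without discussing Hausdorff convergence of the free boundaries, while you give the Evans--Krylov scaling argument explicitly and verify that the indefinite limiting quadratic changes sign in every ball about each of its zeros (which is indeed correct, including at points of $\ker A$). One caveat: your Liouville step uses Evans--Krylov and hence convexity of $F$, whereas the paper asserts (Remark~\ref{rem:concave}) that convexity is only needed in Section~\ref{sec:6}; presumably its intended Liouville argument goes through the $W^{2,p}$ theory instead.
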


Theorem \ref{growth} will follow from Proposition \ref{zimbo} below  in standard way, see \cite{DK}.
Let us define $r_{k}=2^{-k}$ and 
$M\left( r_k,x_{0}\right) =\sup _{B_{r_{k}}\left( x_0\right) }\left| u\right|,$
where $x_{0}\in \partial \left\{ u > 0\right\} \cap \left\{ \left| \nabla u\right| =0\right\} $.

\begin{prop}\label{zimbo}
Let $u$ be as in Theorem \ref{growth} and $\sup|u|\le 1$. If 
\[
h(r_k, x_0, )>\delta r_k
\]
for some $\delta>0$, 
then 
there exists $C=C(\delta, n, \lambda, \Lambda)$ such that 
\begin{equation}\label{eq:M-ineq}
M\left( r_{k+1},x\right) 
\leq 
\max \left( 
Cr^{2}_{k},\dfrac {1}{2^{2}}M\left( r_k,x\right) ,\dots, 
\dfrac {M\left( r_{k-m},x\right) }{2^{2\left( m+1\right) }},\dots, 
\dfrac {M\left( r_0,x\right) }{2^{2\left( k+1\right) }}
\right). 
\end{equation}
\end{prop}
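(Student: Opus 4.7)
The plan is a contradiction-and-compactness argument in the spirit of the stratification approach of~\cite{DK}. Suppose the conclusion fails. Then for every~$j\in\N$ there are a viscosity solution~$u^j$ of~\eqref{eq:F} with~$\sup|u^j|\le 1$, a singular free boundary point~$x_0^j$, and an index~$k_j$ such that~$h(r_{k_j},x_0^j)>\delta r_{k_j}$ while
\[
M(r_{k_j+1},x_0^j)\;>\;\max\!\Big(j\,r_{k_j}^2,\; 4^{-(m+1)}M(r_{k_j-m},x_0^j),\; m=0,\dots,k_j\Big).
\]
The bound~$j\,r_{k_j}^2\le 1$ forces~$r_{k_j}\to 0$, hence~$k_j\to\infty$. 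Introduce the normalised rescaling
\[
v_j(y):=\frac{u^j(x_0^j+r_{k_j+1}y)}{M(r_{k_j+1},x_0^j)},
\]
which by the homogeneity of~$F$ satisfies~$F(D^2 v_j)=-\eta_j\I{v_j}$ with~$\eta_j:=r_{k_j+1}^2/M(r_{k_j+1},x_0^j)<1/(4j)$. The chain of inequalities above translates into~$\sup_{B_1}|v_j|=1$,~$v_j(0)=|\na v_j(0)|=0$, and the quadratic bound~$|v_j(y)|\le 4|y|^2$ on~$\{1\le|y|\le 2^{k_j+1}\}$.

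Uniform interior~$C^{1,\alpha}$ and~$W^{2,p}$ estimates for the rescaled equation, available because~$\|\eta_j\I{v_j}\|_{L^\infty}<1/(4j)$, combine with the quadratic growth bound to extract a subsequence converging in~$C^{1,\alpha}_{loc}(\R^n)$ to a limit~$v_\infty\in C^{1,1}_{loc}(\R^n)$ which, by stability of viscosity solutions together with~$\eta_j\to 0$, satisfies~$F(D^2 v_\infty)=0$ in all of~$\R^n$, with~$v_\infty(0)=0$,~$\na v_\infty(0)=0$, and~$|v_\infty(y)|\le 4(1+|y|^2)$. The Liouville theorem for uniformly elliptic convex operators (Evans--Krylov interior~$C^{2,\alpha}$ estimates applied at all scales, cf.~\cite{CC-Fully}) then forces~$v_\infty$ to be a polynomial of degree at most two; the vanishing of value and gradient at the origin makes it homogeneous of degree two, and~$\sup_{B_1}|v_\infty|=1$ places it in~$P_2$. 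Denote this limit polynomial by~$p_\infty$.

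The proof closes by upgrading~$v_j\to p_\infty$ to Hausdorff convergence~$\fb{v_j}\cap B_2\to\{p_\infty=0\}\cap B_2=S(p_\infty,0)\cap B_2$. The inclusion of limit points of~$\fb{v_j}$ inside~$\{p_\infty=0\}$ is immediate from the~$C^1_{loc}$ convergence and from~$v_j$ vanishing on~$\fb{v_j}$; conversely, at each~$z\in\{p_\infty=0\}\cap B_2$ where~$\na p_\infty(z)\ne 0$, the sign change of~$p_\infty$ transfers to~$v_j$ by uniform convergence and produces nearby free boundary points. Rescaling back to~$r_{k_j}=2r_{k_j+1}$ then yields~$h_{\rm{min}}(r_{k_j},x_0^j,p_\infty)<\delta r_{k_j}$ for~$j$ large, contradicting~$h(r_{k_j},x_0^j)>\delta r_{k_j}$. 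I expect the Hausdorff convergence of free boundaries to be the main technical obstacle, specifically at the singular locus of~$\{p_\infty=0\}$ (the kernel of the Hessian of~$p_\infty$), where the sign-change argument breaks down; controlling the free boundary there will require the non-degeneracy result of Theorem~\ref{thm:nondeg} (or its positive-phase counterpart) applied to the rescaled~$v_j$ to prevent~$\fb{v_j}$ from collapsing away from that singular locus.
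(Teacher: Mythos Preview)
Your argument is essentially the paper's own: contradiction, rescale by $M(r_{k_j+1},x_0^j)$, use the failure of \eqref{eq:M-ineq} to get a uniform quadratic growth bound, pass to a $C^{1,\alpha}_{loc}$ limit $v_\infty$ solving $F(D^2v_\infty)=0$, invoke Liouville to conclude $v_\infty\in P_2$, and contradict the non-flatness hypothesis. The only cosmetic difference is that the paper rescales space by $r_{k_j}$ (so that $\sup_{B_{1/2}}|v_j|=1$ and the non-flatness reads $h_{v_j}(0,1)>\delta$ directly), whereas you rescale by $r_{k_j+1}$; this is immaterial. Where the paper simply asserts that $h(0,1)>\delta$ persists to the limit, you make the Hausdorff-convergence step explicit, which is the honest thing to do.

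Your anticipated obstacle at the singular locus of $\{p_\infty=0\}$ is, however, overstated, and invoking Theorem~\ref{thm:nondeg} is unnecessary (and in fact unavailable, since that result is only for maximal solutions while Proposition~\ref{zimbo} concerns arbitrary viscosity solutions). The point is that the limit satisfies $F(D^2p_\infty)=0$ with $F(0)=0$ and $F$ uniformly elliptic; hence $D^2p_\infty$ cannot be semidefinite, i.e.\ $p_\infty$ is indefinite. In suitable coordinates $p_\infty=\sum\lambda_i x_i^2$ with at least one $\lambda_i>0$ and one $\lambda_j<0$, and any $z\in\ker(D^2p_\infty)$ can be perturbed along $e_i/\sqrt{\lambda_i}+e_j/\sqrt{-\lambda_j}$ to a nearby \emph{regular} zero of $p_\infty$. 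Thus every point of $\{p_\infty=0\}\cap B_2$ lies in the closure of the regular zero set, where your sign-change argument already produces nearby points of $\fb{v_j}$. Together with the easy inclusion $\fb{v_j}\cap B_2\subset\{|p_\infty|<\eta\}$ (uniform convergence), this gives the required Hausdorff proximity without any non-degeneracy input. The paper's proof uses no non-degeneracy either.
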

\begin{proof}

If \eqref{eq:M-ineq} fails then there are solutions $\{u_j\}$ of \eqref{eq:F} with $\sup|u_j|\le 1$, sequences $\{k_j\}$ of integers, and free boundary points $\{x_j\}, x_j\in B_1$
such that 
\begin{equation}\label{eq:M-ineq-fail}
M\left( r_{k_j}+1,x_j\right) 
> 
\max \left( 
j r ^{2}_{k_j},\dfrac {1}{2^{2}}M\left( r_{k_j},x_j\right) ,\dots, 
\dfrac {M\left( r_{k_j-m},x_j\right) }{2^{2\left( m+1\right) }},\dots, 
\dfrac {M\left( r_0,x_j\right) }{2^{2\left( k_j+1\right) }}
\right),  
\end{equation}
where with some abuse of notation we set $M\left( r_{k_j},x_j\right)=\sup_{B_{r_{k_j}}(x_j)}|u_j|$. 
Since $M(r_{k_j}, x_j)\le \sup_{B_1} |u_j|<\infty$ it follows that $k_j\to \infty$. 
Define the scaled functions 
\[
v_{j}\left( x\right) =\dfrac {u_{j}\left( x_{j}+r_{k_j}x\right) }{M\left( r_{k_{j}}+1,x_{j}\right) }.
\]
By construction we have 
\begin{eqnarray*}
v_{j}\left( 0\right) =0, \quad \left| \nabla v_{j}\left( x\right) \right| =0,\\
\sup_{B_{\frac12}}|v_j|=1, \\
h(0, 1)>\delta, \\
v_j(x)\le 2^{2{m-1}}, |x|\le 2^m, m<2^{k_j}, 
\end{eqnarray*}
where the last inequality follows from \eqref{eq:M-ineq-fail} after rescaling the 
inequality $\frac{M(r_{k_j-m}, x_j)}{M(r_{k_j+1}, x_j)}<2^{2(m-1)}$.  
Utilizing the homogeneity of  operator $F$ and noting that 
$$
D^2_{x_\alpha x_\beta} v_j(x)=r_j^2 \left(D^2_{\alpha\beta} u_j\right)(x_j+r_{k_j}x), 
$$ 
it follows that 
		\begin{equation}
		F(D^2 v_j(x))=-\frac{r_{k_j}^2}{M(r_{k_j+1}, x_j)}\I{v_j}=-\sigma_j \I{v_j}, 
		\end{equation}
where $\sigma_j=\frac{r_{k_j}^2}{M(r_{k_j+1}, x_j)}$. 
Observe that $\sigma_j<\frac1j$ in view of 
\eqref{eq:M-ineq-fail}.
Since under hypotheses $\bf  F1^\circ-F2^\circ$ we have local $W^{2, p}$ bounds for all $p\ge 1$ 
(see Theorem 7.1 \cite{CC-Fully})
it follows that we can employ a customary compactness argument for the viscosity solutions
to show that there is a function $v_0\in W^{2, p}_{loc}(\R^n)$ such that 
\begin{eqnarray}\nonumber
v_{k_j}\to v_0 \quad \mbox{in}\ C^{1, \alpha}_{loc}(\R^n), \\\nonumber
v_0(0)=|\na v_0(0)|=0, \\\label{flat-fail}
h(0, 1)>\delta, \\\nonumber
F(D^2 v_0)=0. 
\end{eqnarray}
From Liouville's theorem it follows that $v_0$ is homogeneous quadratic polynomial of degree two.
This is in contradiction with \eqref{flat-fail} and the proof is complete.
\end{proof}

\begin{remark}

In \cite{Lee-Sh} the authors proved some partial results for the 
problem 
\begin{equation}\label{potato}
F(D^2 u)=\chi_{\mathcal D}\  \mbox{in}\ B_1,  \quad u=|\na u|=0\ \mbox{in}\  B_1\setminus \mathcal D.
\end{equation}
For  $F=\Delta$ this problem arises in the linear potential theory related to harmonic continuation of the Newtonian potential of $B_1\cap \mathcal D$.

Analysis similar to that in the proof of Proposition \ref{zimbo} shows
that the result is also valid for the solutions of \eqref{potato}.
\begin{cor}\label{cor-Lee}
Let $u$ be a viscosity solution to \eqref{potato}, 
 then the statement of Theorem \ref{growth} holds for $u$ too.
\end{cor}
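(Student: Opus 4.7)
The plan is to re-run the proof of Proposition~\ref{zimbo} almost verbatim, replacing $\I{u}$ by $\chi_{\mathcal D}$ throughout and noting that the free boundary in \eqref{potato} is the topological boundary $\p\mathcal D$. The singular points of interest are those $x_0\in \p\mathcal D$ at which $u=|\na u|=0$, and the definition of rank-2 flatness is adapted by substituting $\p\mathcal D$ for $\fb u$ in \eqref{defflat:BIS}. Set $M(r,x_0)=\sup_{B_r(x_0)}|u|$ as before and proceed by contradiction, assuming the dyadic decay inequality \eqref{eq:M-ineq} fails.

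This yields solutions $u_j$ of \eqref{potato} with data $\mathcal D^{(j)}$, singular points $x_j$, and integers $k_j\to\infty$ obeying an analogue of \eqref{eq:M-ineq-fail}. Define the rescalings
\[
v_j(x)=\frac{u_j(x_j+r_{k_j}x)}{M(r_{k_j+1},x_j)}.
\]
By the degree-one homogeneity of $F$,
\[
F(D^2 v_j)=\sigma_j\,\chi_{\widetilde{\mathcal D}^{(j)}},\qquad
\widetilde{\mathcal D}^{(j)}=\frac{\mathcal D^{(j)}-x_j}{r_{k_j}},\qquad
\sigma_j=\frac{r_{k_j}^2}{M(r_{k_j+1},x_j)}<\frac1j.
\]
The right-hand side is uniformly bounded (indeed $\sigma_j\to 0$), so the $W^{2,p}$ estimate of Theorem~7.1 in \cite{CC-Fully} together with Remark~\ref{rem:concave} provides a subsequential $C^{1,\alpha}_{loc}$ limit $v_0$ with $v_0(0)=|\na v_0(0)|=0$, $\sup_{B_{1/2}}|v_0|=1$, the quadratic growth bound $|v_0(x)|\le C(1+|x|^2)$ inherited from \eqref{eq:M-ineq-fail}, and $F(D^2 v_0)=0$ in all of $\R^n$ in the viscosity sense. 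Liouville's theorem for uniformly elliptic homogeneous operators then forces $v_0$ to be a homogeneous quadratic polynomial belonging to $P_2$, whose zero set is rank-2 flat at the origin. This contradicts the non-flatness condition $h(r_{k_j},x_j)>\delta r_{k_j}$ transferred to the rescalings, closing the argument.

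The main obstacle, and the only place where the argument genuinely departs from that of Proposition~\ref{zimbo}, is verifying that the rescaled boundaries $\p\widetilde{\mathcal D}^{(j)}$ converge in local Hausdorff distance to the zero set of $v_0$. In \eqref{eq:F} this was automatic because $\{u>0\}$ is encoded in $u$ itself, whereas for \eqref{potato} the set $\mathcal D$ is independent data. The link is restored by combining the hypothesis $u=|\na u|=0$ on $B_1\setminus\mathcal D$, which forces $v_j$ and $\na v_j$ to vanish on the complement of $\widetilde{\mathcal D}^{(j)}$, with a non-degeneracy estimate for $u$ inside $\mathcal D$ obtained by the barrier construction of Theorem~\ref{thm:nondeg}. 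These together identify $\p\widetilde{\mathcal D}^{(j)}$ with $\fb{v_j}$ on every compact set, up to errors of vanishing size, so the quantitative non-flatness passes to the limit and collides with the polynomial form of $v_0$. No new analytic ingredient enters; only the bookkeeping for the free boundary differs.
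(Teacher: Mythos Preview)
Your approach coincides with the paper's: the corollary is stated without a separate proof, merely pointing to ``analysis similar to that in the proof of Proposition~\ref{zimbo}'', and re-running that argument with $\chi_{\mathcal D}$ in place of $\chi_{\{u>0\}}$ is exactly what is intended. The compactness, Liouville, and growth-bound steps transfer verbatim.

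Your identification of the one genuinely new step---controlling $\p\widetilde{\mathcal D}^{(j)}$ in the limit---is apt, but the fix you propose does not work as written. First, Theorem~\ref{thm:nondeg} concerns maximal solutions of \eqref{eq:F}, not \eqref{potato}; the relevant non-degeneracy for \eqref{potato} (namely $\sup_{B_r(x_0)}|u|\ge cr^2$ for $x_0\in\overline{\mathcal D}$) comes from a different and simpler barrier. Second, and more to the point, even that non-degeneracy does \emph{not} identify $\p\mathcal D$ with $\fb u=\p\{u>0\}$: in the no-sign problem \eqref{potato} the solution may change sign inside $\mathcal D$, so $\p\{u>0\}$ typically contains nodal pieces interior to $\mathcal D$ that have nothing to do with $\p\mathcal D$. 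What non-degeneracy actually yields is $\p\mathcal D=\p\,\mathrm{supp}\,u$, but since $\mathrm{supp}\,p_0=\R^n$ for the limiting polynomial this identification gives no information at the limit either.

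The datum that does survive the rescaling is the \emph{first-order} vanishing built into \eqref{potato}: from $v_j=|\nabla v_j|=0$ on the rescaled coincidence set $\widetilde\Lambda_j$ together with the $C^1$ convergence $v_j\to p_0$, one gets $\widetilde\Lambda_j\cap B_1$ (hence $\p\widetilde{\mathcal D}^{(j)}\cap B_1$) trapped in an arbitrarily small neighbourhood of $\{p_0=0\}\cap\{\nabla p_0=0\}$, the kernel of $D^2p_0$. It is this constraint---not quadratic non-degeneracy---that one plays against a suitable $S(p,0)$ to contradict the non-flatness hypothesis, exactly as the $\{v_j=0\}$ constraint is used (implicitly) in Proposition~\ref{zimbo}.
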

\end{remark}

\vs
\section{Quadruple junctions}\label{sec:6}
Throughout this section we assume that $F$ is convex, satisfies $\bf F1^\circ-F2^\circ$ and $u$ is a  
viscosity solution, see Section \ref{sec:3}. 
\begin{lem}\label{lem:mon}
Assume $\bf F1^\circ-F2^\circ$ hold and $F$ is convex. Let $n=2$ and $0\in \fb u, |\na u(0)|=0$ be a rank-2 flat point such that  the zero set of the polynomial $p(x)=M^2 x_1^2-x_2^2, M>0$
approximates $\fb u$ near $0$. 
Assume further that $u$ is non-degenerate at $0$. 
Then 
for every $\delta_0>0$ there is $r_0=2^{-k_0}$ (for some $k_0\in \mathbb N$) 
such that $\p_2 u(x+te)\ge 0$ whenever  $x\in (B_{r_0}\setminus B_{\delta_0r_0})\cap K^-$
and $\delta_0\le t\le 2$. 
\end{lem}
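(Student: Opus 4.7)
The plan is to apply the C-monotonicity Theorem \ref{C-mon} to $-u$ in the Lipschitz negative cone $K^-$, after rescaling so that the free boundary is a small Lipschitz perturbation of the nominal cross $\{p=0\}$. Since Theorem \ref{C-mon} produces an inner monotonicity region of scale $\delta(M) R$ when applied at ambient scale $R$, choosing $R = r_0/\delta(M)$ will deliver monotonicity on the full ball $B_{r_0}\cap K^-$, which contains the desired annulus.

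First I would fix $r_0=2^{-k_0}$ so small that, setting $R=r_0/\delta(M)$, the rank-2 flatness hypothesis yields
\[
\HD\bigl(\fb u\cap B_R,\,\{p=0\}\cap B_R\bigr)<\eta R
\]
for a prescribed small $\eta>0$. Quadratically rescaling $v(x)=R^{-2}u(Rx)$, the $1$-homogeneity of $F$ gives $F(D^2v)=-\I v$ in $B_1$, and $\fb v\cap B_1$ is within $\eta$ of $\{p=0\}\cap B_1$; by Theorem \ref{thm:nondeg} and the non-degeneracy hypothesis, $v$ is uniformly bounded. Working in the bottom component $K^-_b=\{x_2<-M|x_1|\}$ of $K^-$ (the top being symmetric), introduce the retracted cone $K^{-,\eta}_b=\{x_2<-M|x_1|-\eta\}$. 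For $\eta$ small this retracted cone is disjoint from $\fb v$ in $B_1$, so the strong comparison principle (Lemma \ref{lem:SCP}) combined with non-degeneracy gives $v<0$ strictly on $K^{-,\eta}_b\cap B_1$, whence $F(D^2v)=0$ there.

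Next I would set $w=-v\ge 0$, noting that $w$ satisfies $\tilde F(D^2w)=0$ for the uniformly elliptic concave operator $\tilde F(N):=-F(-N)$. After the reflection $y_2=-x_2$ (and the induced conjugation of $\tilde F$, still uniformly elliptic and concave), the domain $K^{-,\eta}_b$ becomes the epigraph $\{y_2>M|y_1|+\eta\}$ of a Lipschitz graph with constant $M$, placing us in the setting of Theorem \ref{C-mon}; the convex case for $F$ is covered via this concave dual and the $C^{1,\alpha}$ regularity from Remark \ref{rem:concave}, as pointed out after the statement of that theorem. Hence $\partial_{y_2}w\ge 0$ on the inner region $\{|y_1|\le\delta(M),\,M|y_1|+\eta<y_2\le M\delta(M)\}$. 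Undoing the reflection and rescaling back, this reads $\partial_2 u\ge 0$ on $K^-_b\cap B_{r_0}$ minus an $\eta$-thin layer near $\fb u$, which contains $K^-_b\cap(B_{r_0}\setminus B_{\delta_0 r_0})$ once $\eta$ is chosen small relative to $\delta_0$. The translated points $x+te$ for the stated range of $t$ stay in a slight enlargement of this region, so the same monotonicity applies; points that cross into $\{u>0\}$ are handled by $C^{1,\alpha}$ matching across the free boundary.

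The principal obstacle is to guarantee a uniform Lipschitz character of $K^{-,\eta}_b$, independent of $\eta$, so that Theorem \ref{C-mon} delivers a universal $\delta(M)$; this separation of scales is exactly the point of working at the ambient scale $R=r_0/\delta(M)$ and then choosing $\eta\ll\delta_0$. A secondary issue is ensuring that the specific polynomial $p$ governing the rank-2 flatness at $0$ continues to approximate $\fb u$ at every scale $r\le R$, which follows from a standard blow-up uniqueness argument under the non-degeneracy and flatness hypotheses imposed here.
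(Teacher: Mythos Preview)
Your direct approach has a genuine gap: Theorem~\ref{C-mon} requires the nonnegative solution $w$ to vanish on the Lipschitz graph $\{x_2=f(x_1)\}$ (this is implicit in the boundary-Harnack setup of \cite{C-1,Wang}; without it the conclusion is false, as the linear function $w=c-x_2$ shows). You apply the theorem on the retracted cone $K^{-,\eta}_b$, but $w=-v$ does \emph{not} vanish on $\partial K^{-,\eta}_b$: it is strictly positive there, since the actual free boundary sits at Hausdorff distance $\sim\eta$ away. Nor can you replace the retracted cone by the true set $\{v<0\}$: rank-2 flatness only says that $\fb v$ lies in an $\eta$-neighborhood of the cross, which does not make $\fb v$ a Lipschitz graph at all, let alone one with constant comparable to $M$. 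So at any fixed finite scale you are never in the hypotheses of Theorem~\ref{C-mon}.

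The paper avoids this by contradiction and compactness. Assuming the monotonicity fails along scales $r_k\to 0$, one rescales (normalizing by $M(r_k)$ or $M(r_{k+1})$, according to the dichotomy of Section~\ref{sec:4}, rather than by $r_k^2$) and passes to a limit $v_0$. In the limit the free boundary \emph{is} exactly the cone $\partial K^-$, so Theorem~\ref{C-mon} applies cleanly to $v_0$ and forces $\partial_2 v_0=0$ at the offending point; the strong maximum principle for the linearized equation $F_{ij}D_{ij}(\partial_2 v_0)=0$ then gives $\partial_2 v_0\equiv 0$ in $K^-$, contradicting the nondegenerate cross geometry. Two smaller issues in your sketch: the quadratic rescaling $v=R^{-2}u(R\cdot)$ is not bounded by non-degeneracy alone (Theorem~\ref{thm:nondeg} controls $u$ from below, not above), and the ``standard blow-up uniqueness argument'' you invoke at the end is exactly what is unavailable here in the absence of a monotonicity formula.
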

\begin{proof}
Let $\theta_0=\arctan M$ and denote $K^-=\{x_2\ge M|x_1|\}$.
After rotation of coordinate system we can assume that $K^-$ contains 
$u<0$ away from some small neighborhood of 
$x_2=M|x_1|$ (the green cones in Figure \ref{fig1} represent that neighborhood).

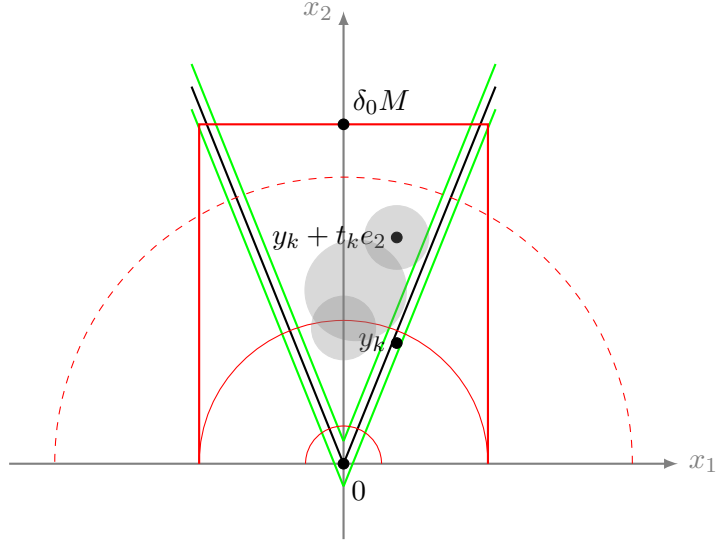
\begin{figure}
\begin{center}

\begin{tikzpicture}
 \draw[gray, thick, -latex] (-4.4, 0) -- (4.4, 0) node[right]{$x_1$}; 
  \draw[gray, thick, -latex] (0, -1) -- (0, 6) node[left]{$x_2$}; 
    \draw[thick] (0, 0) -- (2, 5) ; 
        \draw[thick] (0, 0) -- (-2, 5) ; 
        \draw[green, thick] (0, 0.3) -- (2, 5.3) ; 
        \draw[green, thick] (0, 0.3) -- (-2, 5.3) ; 
         \draw[green, thick] (0, -0.3) -- (2, 4.7) ; 
        \draw[green, thick] (0, -0.3) -- (-2, 4.7) ; 
                \draw[red, thick] (-1.9, 0) -- (-1.9, 4.5) -- (1.9, 4.5) -- (1.9,0); 
                \draw[red] (0.5, 0)  arc[radius = 5mm, start angle= 0, end angle= 180] ;
                                \draw[red] (1.9, 0)  arc[radius = 19mm, start angle= 0, end angle= 180];
                \draw[red, dashed] (3.8, 0)  arc[radius = 38mm, start angle= 0, end angle= 180] ;
  \filldraw[black] (0.7,1.6) circle (2pt) node[left] {$y_k$}; 
    \filldraw[black] (0.7,3) circle (2pt) node[left] {$y_k+t_k e_2$}; 
  \filldraw[gray, opacity=0.3] (0.7,3) circle (12pt);
  \filldraw[gray, opacity=0.3] (0.16,2.3) circle (19pt);
  \filldraw[gray, opacity=0.3] (0,1.8) circle (12pt);

  \filldraw[black] (0,4.5) circle (2pt);
   \node[left, above] at (0.5, 4.5) {$\delta_0M$};
  \filldraw[black] (0,0) circle (2pt); 
  \node[below] at (0.2, -0.1) {$0$};

\end{tikzpicture}

\end{center}

\caption{The geometric construction in the proof of Lemma \ref{lem:mon}.  
The shadowed balls are in the Harnack chain.}
\label{fig1}
\end{figure}


Suppose the claim fails, then there is $\delta_0>0$ so that for every 
$r_k=2^{-k}\to 0$ and some points $x_k\in B_{r_k}\setminus B_{\delta_0r_k}\cap \Omega^-(u)$
we have 
\begin{equation}\label{cod-0}
\p_2u(x_k+r_kt_k e_2)<0, \quad \mbox{for some }\ \delta_0\le t_k\le 2.
\end{equation}
We can choose $\delta_0$ so that for large $k$ there holds $\delta_0>\frac{h_k}{\cos\theta_0}\to 0, $
where $h_k=h(2^{-k}, 0)$.

Introduce the scaled functions 
\begin{equation}
v_k(x)=
\left\{
\begin{array}{lll}
\displaystyle
\frac{u(r_k x)}{M(r_{k})}\quad &\mbox{if} \ \eqref{eq:M-ineq}\ \mbox{ is true for all}\  k\ge \hat k,  \mbox{for some fixed}\ \hat k, \\
\displaystyle
\frac{u(r_k x)}{M(r_{k+1})}\quad &\mbox{if there is a sequence } r_k=2^{-k} \ \mbox{such that }\ \eqref{eq:M-ineq-fail} \ \mbox{holds}.\\
\end{array}
\right.
\end{equation}
Here we set $M(r_k)=M(r_k, 0)$. 
For both scalings  we have that $v_k$'s are non-degenerate: for the first scaling  it follows from Theorem \ref{thm:nondeg} 
(our assuption on non-degeneracy), for the second one  it follows that 
$\sup_{B_{1/2}}|v_k|=1$.

Moreover, by \eqref{cod-0} there is $y_k\in (B_1\setminus B_{\delta_0})\cap \{v_k<0\}$ such that 
\begin{equation}\label{}
\p_2 v_k(y_k+ t_k e_2)<0, \quad \mbox{for some }\ \delta_0\le t_k\le 2.
\end{equation}

There is a subsequence $y_{k_j}+t_{k_j}e_2\to y_0+t_0e_2\in K^-\cap B_2, $ 
there is a Harnack chain $B^1, \dots, B^N$ where $B^1=B_{\cos\theta_0/2}(e_2)$
and $B^N=B_{\delta_0/2}(y_0)$, $N$ is independent of $k_j$.
Let $\widetilde K=B_1\cup\bigcup_{i=1}^N B^i$.
Since under hypotheses $\bf  F1^\circ-F2^\circ$ we have local $W^{2, p}$ bounds for all $p\ge 1$ 
(see Theorem 7.1 \cite{CC-Fully})
it follows that we can employ a customary compactness argument for viscosity solutions
to infer that there is a function $v_0\in W^{2, p}_{loc}(\R^n)$ such that
we have 
\begin{eqnarray*}
v_k\to v_0 \quad &\mbox{in}\ W^{2, p}, \forall p\ge 1, \\
|F(D^2v_k)|\le C &\mbox{uniformly}, \\
v_0<0 &\mbox{in}\ K^-, \\
|v_k|\le C &\mbox{in Harnack chain domain}\ \widetilde K, \\
\p_2v_0(y_0+t_0e_2)\le 0. 
\end{eqnarray*}
From Theorem \ref{C-mon} it follows that 
$\p_2v_0(y_0+t_0e_2)=0$. Moreover, $w=\p_2 v_0$ satisfies the equation 
$F_{ij} D_{ij} w=0$ in $\widetilde K$, hence from 
the strong maximum principle it follows that $w=0$ in $K^-$. 
Consequently, 
$v_0$ depends only on $x_1$ implying that 
$\theta_0=0$ or $\theta_0=\pi/2$ which is a contradiction.

\end{proof}

\begin{thm}\label{thm:only-conc}
Let $u$ be as in Lemma \ref{lem:mon}.
Then in some neighbourhood of $0$
the free boundary consists of four  $C^1$ curves tangential to the zero set   of the polynomial $M^2 x_1^2-x_2^2$.
\end{thm}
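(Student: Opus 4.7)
The plan is to analyze each of the four arcs of $\fb u$ separately and show that each is $C^1$ and tangent at the origin to one of the crossing lines $\ell_+=\{x_2=Mx_1\}$ or $\ell_-=\{x_2=-Mx_1\}$ making up the zero set of $p(x)=M^2x_1^2-x_2^2$. I focus on the arc $\Gamma$ lying in the first quadrant near $\ell_+$; the other three arcs are treated by entirely analogous applications of Lemma \ref{lem:mon}, with $K^-$ replaced by the appropriate rotated cone. The first step is to apply Lemma \ref{lem:mon} (and an analogous statement obtained after a small rotation of coordinates compatible with its hypotheses) to produce an open cone of directions $e$, of opening strictly less than $\pi$, for which $\p_e u\ge 0$ on a suitable one-sided neighborhood of $\Gamma$ in the rescaled coordinates. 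By the standard monotone-cone argument this forces $\Gamma$ to be a Lipschitz graph over a line nearly parallel to $\ell_+$, with Lipschitz constant determined by the aperture of the cone.

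The second step upgrades Lipschitz regularity to $C^{1,\alpha}$ via the boundary Harnack principle \eqref{eq:bHarnack}. For any two directions $e_1,e_2$ in the monotone cone, the derivatives $w_i=\p_{e_i}u$ are nonnegative on the $\{u<0\}$ side of $\Gamma$, vanish on $\Gamma$, and both satisfy the linearized uniformly elliptic equation $F_{ij}(D^2u)\,\p_{ij}w_i=0$. The boundary Harnack principle then yields H\"older continuity of the quotient $w_1/w_2$ up to $\Gamma$; since this quotient encodes the angle between the outward normal of $\Gamma$ and $e_1$, the unit normal of $\Gamma$ is H\"older continuous, and therefore $\Gamma\setminus\{0\}$ is $C^{1,\alpha}$.

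To close the argument at the vertex and identify the tangent of $\Gamma$ at $0$ with $\ell_+$, I iterate the above at the dyadic scales $r_k=2^{-k}$. By Theorem \ref{thm:nondeg} and the hypothesis that the zero set of $p$ approximates $\fb u$ near $0$, the rescalings $v_k(x)=u(r_kx)/r_k^2$ converge locally uniformly to a homogeneous quadratic profile whose free boundary is exactly $\ell_+\cup\ell_-$. Consequently, in each dyadic annulus $B_{r_k}\setminus B_{r_{k+1}}$ the four arcs of $\fb u$ remain uniformly separated, each lying in a domain of uniformly Lipschitz character, and the $C^{1,\alpha}$ estimates of the previous paragraph apply uniformly in $k$. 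Letting $k\to\infty$ along $\Gamma$ matches the tangent direction of $\Gamma$ at $0$ with that of $\ell_+$. The main obstacle is precisely this last step: the boundary Harnack principle in Section \ref{sec:2} requires a uniformly Lipschitz ambient domain, a hypothesis that fails in any neighborhood containing $0$, where all four arcs of $\fb u$ meet. The dyadic iteration, anchored by the convergence of the $v_k$ to the cross-shaped quadratic limit, is the mechanism used to circumvent this failure.
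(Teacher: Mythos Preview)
Your route differs substantially from the paper's, and it has two genuine gaps.

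First, the boundary Harnack step is misapplied. In this problem $u$ changes sign across $\fb u$: on one side $u>0$, on the other $u<0$. Hence at any regular point $x_0\in\Gamma$ one has $\nabla u(x_0)\neq 0$, and the directional derivatives $w_i=\partial_{e_i}u$ do \emph{not} vanish on $\Gamma$. The boundary Harnack inequality \eqref{eq:bHarnack} is stated for nonnegative solutions vanishing on the Lipschitz boundary; your $w_i$ fail this hypothesis, so the quotient argument as written does not go through. Note also that once one knows $\nabla u\neq 0$ along $\Gamma\setminus\{0\}$, the $C^{1}$ regularity of $\Gamma\setminus\{0\}$ is immediate from the implicit function theorem (since $u\in C^{1,\alpha}$), so the boundary Harnack machinery would be redundant anyway. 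The real content of the theorem is precisely to rule out singular points on $\Gamma$ other than $0$; your monotone-cone argument gives only $\partial_{e}u\ge 0$, which is compatible with $\nabla u$ vanishing somewhere on $\Gamma$, so this is not addressed.

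Second, the claim that $v_k(x)=u(r_kx)/r_k^2$ converges to a homogeneous quadratic profile presupposes quadratic growth from above, $\sup_{B_r}|u|\le Cr^2$. Rank-2 flatness does not provide this (Theorem~\ref{growth} gives the converse implication), and non-degeneracy only gives the lower bound. The paper avoids this by normalizing instead by $M^-(2r_k\ell_0)=\sup_{B_{2r_k\ell_0}}u^-$, and then proving---crucially using the convexity of $F$---a claim of the form $\sup v_k^+\le C\sup v_k^-$ via an averaging inequality and the weak Harnack inequality for subsolutions. This is the step where convexity enters and is the analytic heart of the proof; your outline bypasses it without a substitute.

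For comparison, the paper's argument is: suppose there exist $x_k\in\fbs u$ with $x_k\to 0$, rescale by $r_k=|x_k|$ and $M^-(2r_k\ell_0)$, use the claim above to get compactness of both phases, and pass to a limit $v_0$ with a singular free boundary point $y_0$ lying on one of the straight lines $\{x_2=\pm Mx_1\}$. Since the limit free boundary is flat at $y_0$, Hopf's lemma forces $v_0^-\equiv 0$, contradicting the normalization $\sup v_0^-=1$. Thus $0$ is isolated in $\fbs u$, and the $C^1$ structure of the four arcs then follows at once from $u\in C^{1,\alpha}$ together with rank-2 flatness for the tangency.
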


\begin{proof}
Let $\fbs u=\fb u\cap \{|\na u|=0\}$. 
Clearly, it is enough to prove that there is $r$ such that $\fbs u\cap B_r=\{0\}$.
Suppose the claim fails. Then there is a sequence $x_k\in \fbs u$, $x_k\to 0$. Let 
$M_k^-:=M^-(2r_k\ell_0)=\sup_{B_{2r_k\ell_0}} u^-, r_k=|x_k|$ and consider 
\begin{equation}
v_k(x)=\frac{u(r_k x)}{M^-(2r_k\ell_0)} \ \text{where} \ \ell_0=\sqrt{\frac\Lambda \lambda}.
\end{equation}
Note that $F(D^2 v_k)=-\I{v_k}\frac{r_k^2}{M^-(2r_k\ell_0)}$, and therefore 
by dichotomy (see Section \ref{sec:4}) and non-degeneracy $|F(D^2 v_k)|\le C$ for some $C>0$ independent of $k$.

By construction $\sup_{B_{2\ell_0}}|v_k^-|=1$ and since $F(D^2 v_k)=0$ in $\Omega^-(v_k):=\{v_k<0\}$ it 
follows that there is $z_k\in \p B_{2\ell_0}\cap \Omega^-(v_k)$ such that $v_k^-(z_k)=1$. Consequently, 
$\dist(z_k+\delta_0 e_2, \{p=0\})\ge \delta_0/2$ and by Lemma \ref{lem:mon}
\[
v_k^-(z_k+\delta_0 e_2)\ge 1.
\]
\begin{claim}
With the notation above we have 
\[
M^+_k\le M_k^-.
\]
\end{claim}
To check this we first observe that $\text{trace}(A_t D^2 v_k(x))\le F(D^2 v_k x)$ thanks to the 
convexity  
 and 
$A_t\in \mathcal S_{\lambda, \Lambda}$. Now  consider $w_{k, t}(x)=v_k(A^{\frac12}_t x)$ then 
$\Delta w_{k, t}(x)=\text{trace}(A_t D^2 v_k(x))\le F(D^2 v_k (x))$. Since $w_{k, t}$
 is continuous and $w_{k, t}(0)=0$ then one can easily check that 
\begin{equation}\label{2fgnumb}
\fint_{B_r}w_{k,t}=\int_0^r\frac1{t}\int_{B_t}\Delta w_{k, t}\le 0
\end{equation}
because 
of  convexity of $F$ and the estimate $F(D^2 v_k)\le 0$.

Note that 
\[
\int_{B_r}w_{k,t}(x)dx=\frac1{\sqrt{\det A_{t}}}\int_{\left|A^{-\frac12}y\right|<r}v_k(y)dy\le 0.
\]

Thus from \eqref{2fgnumb} it follows that 
\begin{eqnarray*}
\frac1{\Lambda}\int_{B_{\frac r{\sqrt\Lambda}}}v_k^+
&\le& 
\frac1{\sqrt{\det A_{t}}}\int_{\left|A^{-\frac12}y\right|<r}v_k^+(y)dy\le \frac1{\sqrt{\det A_{t}}}\int_{\left|A^{-\frac12}y\right|<r}v_k^-(y)dy\\
&\le &
\frac1{{\lambda }}\int_{B_{\frac r{\sqrt \lambda}}}v_k^-(y)dy, \ \  r<2.
\end{eqnarray*}
Consequently,  we get
\[
\int_{B_r}v_k^+(y)dy\le \ell_0^2 \int_{B_{r\ell_0}}v_k^-. 
\]

Let $\widehat v_k=v_k+\widehat C|x|^2$ 
then  
\[
F(D^2 \widehat v_k)\ge F(D^2v_k)+2\widehat C\lambda \ge 0,
\]
provided that $\widehat C$ is sufficiently large. 

We see that $\widehat v_k$ is a subsolution, and hence so is $\widehat v_k^+$.
Consequently, applying the weak Harnack inequality \cite{CC-Fully} we get 
\[
\sup_{B_{\frac43}}v_k^+\le \sup_{B_{\frac43}}\widehat v_k^+\le c_0\int_{B_{2}}(v_k^-+\widehat C|x|^2)\le c_0(1+2\pi \widehat C). 
\]
This completes the proof of the claim.

Thus, as in the proof of Lemma \ref{lem:mon}, we can employ a customary compactness argument 
in $W^{2, p}$  so that $y_k=x_k/r_k\to y_0\in \{x_2=M|x_1|\}\cap 
\p B_1$ and 
\[
\na v_0(y_0)=0, \quad v_0(z_0+\delta_0e_2)\ge 1
\]
by Harnack chain and $C^{1, \alpha}$ estimates in the Harnack chain domain (which joins $2\ell_0e_2$ with $z_0+\delta_0e_2$).
Since at $y_0$ free boundary is a line we can apply Hopf's lemma to conclude that 
$v_0^-\equiv 0$ which is a contradiction.  
\end{proof}

\vs
\section{Existence of homogenous blow-ups}\label{sec:5}
In this section we show that if the free boundary is a cone then one can 
blow-up the solution at the vertex so that the limit is  a homogeneous function.
We start with a doubling inequality which provides a bound for the rate of the scaling at the vertex. 
\begin{lem}\label{lem:doubling}
Assume $\bf F1^\circ-F2^\circ$ hold and $F$ is concave. Let $\K_t=\{x=r\sigma, \sigma\in S, 0< r<t\}$, $S\subset \sf^{n-1}$ such that 
$\p S$ is smooth. Let $v$ be the solution to the 
Dirichlet problem $F(D^2 v)=0$ in $\K_1 $ and $v=v_0$ on $\p \K_1$ where 
\[
v_0(x)=
\begin{cases}
0 & x\in \p \K_1\cap B_{\frac34}, \\
16\left(|x|-\frac34\right)^2  &x\in \p \K_1\setminus B_{\frac34}.
\end{cases}
\]
Then there is a constant $\e>0$ such that 
\begin{equation}\label{eq:mon-R}
v(Rx)\le (1-\e(1-R))v(x), \quad x\in \K_1, \forall R\in\left[\frac12, 1\right].
\end{equation}
\end{lem}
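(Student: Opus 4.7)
The plan is to rewrite the doubling inequality as a comparison between two solutions of $F(D^2\cdot)=0$ and reduce it, via Pucci's minimum principle, to a boundary verification on $\p\K_1$. Set $\alpha:=1-\e(1-R)$ and define
\[
\widetilde v(x):=\alpha^{-1}v(Rx),\qquad x\in\K_1\subset\K_{1/R}.
\]
The $1$-homogeneity of $F$ together with $F(D^2v)=0$ yields $F(D^2\widetilde v)(x)=\alpha^{-1}R^2F(D^2v)(Rx)=0$ in $\K_1$. Hence $\Phi:=v-\widetilde v$ satisfies $\M^-(D^2\Phi)\le 0$ in the viscosity sense (via the elementary ellipticity bound $F(A)-F(B)\ge\M^-(A-B)$), and Pucci's minimum principle gives $\inf_{\ol\K_1}\Phi=\inf_{\p\K_1}\Phi$. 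It thus suffices to show $\Phi\ge 0$ on $\p\K_1$.

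Decompose $\p\K_1=\{0\}\cup L\cup T$, with $L:=\{r\sigma:\sigma\in\p S,\,0<r\le 1\}$ the lateral cone and $T:=\{\sigma:\sigma\in\ol S\}$ the spherical cap. On the apex and on $L\cap B_{3/4}$ both $v$ and $\widetilde v$ vanish, and on $L\cap\{r\ge 3/4,\,Rr<3/4\}$ one has $\widetilde v=0\le v$. On $L\cap\{Rr\ge 3/4\}$ the inequality $\Phi\ge 0$ reduces, after multiplying through by $\alpha$, to the algebraic claim
\[
(Rr-3/4)^2\le\alpha(r-3/4)^2,\qquad r\in[3/(4R),1].
\]
A direct computation shows that $\phi(r):=\alpha(r-3/4)^2-(Rr-3/4)^2$ is a convex quadratic whose unique critical point $r_\star=3(1-\e)/\bigl(4(1+R-\e)\bigr)$ satisfies $r_\star\le 3/(4R)$ whenever $\e(1-R)\le 1$. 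Therefore $\phi$ is non-decreasing on $[3/(4R),1]$, and $\phi(3/(4R))=\alpha\bigl(3(1-R)/(4R)\bigr)^2\ge 0$ settles this case uniformly in $R\in[1/2,1]$ for any sufficiently small $\e$.

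The remaining -- and main -- verification is on the spherical cap $T$, where $v\equiv 1$ and the inequality reads
\[
v(R\sigma)\le 1-\e(1-R),\qquad\sigma\in\ol S,\;R\in[1/2,1].
\]
Because $v_0<1$ on $\p\K_1$ except at its $|x|=1$ endpoint, the strong maximum principle (Lemma~\ref{lem:SCP}) yields $v<1$ throughout $\K_1$. Concavity of $F$ provides the interior $C^{2,\alpha}$ estimates of Remark~\ref{rem:concave}, so on the smooth portion of the cap Hopf's lemma (Lemma~\ref{lem:HCP}), applied with the subsolution $v$ and the constant supersolution $1$, gives $\sigma\cdot\na v(\sigma)>0$. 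A compactness argument on the set $S_\eta:=\{\sigma\in S:\dist(\sigma,\p S)>\eta\}$ upgrades this to a uniform lower bound $\sigma\cdot\na v(\sigma)\ge c_0(\eta)>0$, which, integrated from $R\sigma$ to $\sigma$ along the ray, produces $v(R\sigma)\le 1-c_0(\eta)(1-R)$ on $S_\eta$. For $\sigma$ within distance $\eta$ of $\p S$, the point $R\sigma$ lies near the corner $\p S\times\{1\}$; there one exploits the fact that the lateral-cone boundary value of $v$ at radius $R$ equals $16(R-3/4)_+^2$, which is strictly below $1$ with a quantitative gap, and combines the $C^{0,\alpha}$ boundary regularity of $v$ with a local barrier built from the distance to $\p\K_1$ to obtain $v(R\sigma)\le 1-c_1(\eta)(1-R)$ also near the corner. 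Choosing $\eta$ small and then $\e\le\min(c_0(\eta),c_1(\eta))$ completes the boundary check. The principal obstacle is exactly this corner analysis at $\p S\times\{1\}$, where the non-smoothness of $\p\K_1$ and the mixed structure of the Dirichlet data prevent a direct Hopf estimate and force us to patch it with a quantitative barrier argument adapted to the corner geometry.
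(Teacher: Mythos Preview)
Your reduction to a boundary comparison via the Pucci minimum principle, and your careful handling of the lateral boundary (including the case $Rr\ge 3/4$, which the paper actually glosses over), are fine and match the paper's overall strategy. The difference---and the gap---lies entirely on the spherical cap $T$.

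The paper does not use Hopf's lemma or any corner patching there. Instead it constructs a single explicit radial supersolution
\[
b(x)=1+\frac{1}{\alpha^{2}}\bigl(e^{-\alpha}-e^{-\alpha|x|^{2}}\bigr),\qquad \alpha\ge 2\Lambda/\lambda,
\]
checks $F(D^{2}b)\le 0$ in the annulus $\tfrac12\le|x|\le 1$, verifies $b\ge v$ on $\partial(\K_1\setminus\K_{1/2})$, and concludes $v\le b$ there by comparison. Then for every $\sigma\in\overline S$ and $R\in[\tfrac12,1]$ one has, by the mean value theorem,
\[
v(R\sigma)\le b(R\sigma)=1+\frac{1}{\alpha^{2}}\bigl(e^{-\alpha}-e^{-\alpha R^{2}}\bigr)\le 1-\frac{e^{-\alpha}}{\alpha}(1-R),
\]
which gives the cap estimate uniformly, with no distinction between interior points of $S$ and the corner $\partial S$.

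Your argument on $T$, by contrast, has two genuine problems. First, the Hopf bound $\sigma\cdot\nabla v(\sigma)\ge c_0(\eta)$ lives only at the boundary point $\sigma$, whereas the integration $v(\sigma)-v(R\sigma)=\int_R^1\sigma\cdot\nabla v(t\sigma)\,dt$ requires the lower bound along the whole segment $[R\sigma,\sigma]$; you would need $C^1$ regularity up to the cap (not the interior estimate you cite) together with a uniform-continuity step to push the bound into a collar, plus a separate argument for $R$ bounded away from~$1$. Second, and more seriously, the corner step is not actually carried out: $C^{0,\alpha}$ boundary regularity with $\alpha<1$ cannot produce a bound linear in $(1-R)$, and a barrier ``built from the distance to $\partial\K_1$'' controls decay toward the lateral wall, not the radial rate you need approaching the cap. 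What is actually required near $\partial S$ is precisely a radial supersolution dominating $v$---i.e.\ the paper's~$b$---and once such a barrier is in hand the entire Hopf/corner dichotomy becomes unnecessary.
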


\begin{proof}
Existence of $v$ follows from the Perron's method \cite{CC-Fully}.

Consider the barrier 
$b\left( x\right) =1+\dfrac {1}{\alpha ^{2}}\left( e^{-\alpha }-e^{-\alpha \left| x\right| ^{2}}\right)$ for some 
$\alpha>0$ to be fixed below. 
We have 
\begin{eqnarray}
b_{i}(x)
=
\dfrac {2x_i}{\alpha }e^{-\alpha \left| x\right| ^{2}},\quad  
D^{2}_{ij}b\left( x\right) 
=
\dfrac {2}{\alpha }e^{-\alpha \left| x\right| ^{2}}\left( \delta _{ij}-2\alpha x_{i}x_{j}\right) , \quad 1\le i, j\le n. 
\end{eqnarray}
Consequently, in the ring $\frac12\le |x|\le 1$ we have from $\bf F1^\circ$ and $\bf F2^\circ$ that  
\begin{eqnarray*}
F(D^2b(x))
&=&
\frac2\alpha e^{-\alpha|x|^2}F(\delta_{ij}-2\alpha x_ix_j)\\
&\le&
\frac2\alpha e^{-\alpha|x|^2}[F(\delta_{ij})-\lambda F(2\alpha x_ix_j)]\\
&\le& 
\frac2\alpha e^{-\alpha|x|^2}\lambda\left[\frac\Lambda\lambda-2\alpha|x|^2\right]\\
&\le& 0
\end{eqnarray*}
provided that $\alpha\ge 2\frac\Lambda\lambda$.

Since $b$ is concave function of $r=|x|$ and $v_0$ is convex in $r$ such that 
$b(0)\ge v_0(0)$ and $b(x)\ge v(x), |x|=1$,  then it follows that 
$b(x)\ge v_0(x)$ on $\p(\K_1\setminus K_{\frac12})$. 
Moreover, by the maximum principle we have $v(x)\le 1$ on $\p K_{\frac12}\cap \p B_{\frac12}$.
Thus $v(x)\le b(x)$ on   $\p(\K_1\setminus K_{\frac12})$, which in conjunction with 
$F(D^2 b(x))\le 0, \frac12\le |x|\le 1$ and the comparison principle implies that $v(x)\le b(x)$ in $(\K_1\setminus K_{\frac12})$.

\smallskip 

Furthermore, for $x\in \p\K_1\-S$ we have $v(Rx)=0$,  hence 
\eqref{eq:mon-R} is true for every $x\in \p\K_1\-S$. 
On the other hand on $S$ (where $|x|=1$)
we have 
\begin{eqnarray*}
v(Rx)\le b(Rx)
&=&1+ \frac1{\alpha^2}\left[e^{-\alpha }-e^{-\alpha R^2}\right]
\\
&\le&
1-\frac1\alpha e^{-\alpha}(1-R)\\
&=&
\left(1-\frac1\alpha e^{-\alpha}(1-R)\right)v(x), 
\end{eqnarray*}
where the last line  follows from the mean value theorem. Thus  \eqref{eq:mon-R} holds on $\p \K_1$ 
with $\e=\frac{e^{-\alpha}}\alpha$.
Applying the comparison principle to $v(Rx)$ and $(1-\e(1-R))v(x)$ the result follows. 
\end{proof}

\medskip 

With the help of Lemma \ref{lem:doubling} we can prove the existence 
of a homogeneous solution of the form $r^\kappa\phi(\sigma)$ vanishing on the boundary of the cone
$\mathcal K_1$.  

\begin{thm}\label{thm:v-exist}
There is $\kappa>0$ and $\phi\in C^\infty(\ol S)$ satisfying 
$F(D^2(r^\kappa\phi(\sigma)))=0, \phi|_{\p S}=0, \phi|_{K_1}>0$. 
\end{thm}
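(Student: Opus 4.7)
The plan is to produce $w$ as a limit of rescalings of the barrier $v$ from Lemma~\ref{lem:doubling}, to deduce homogeneity of $w$ from a uniqueness statement for positive solutions in the infinite cone $\K_\infty:=\bigcup_{t>0}\K_t$ vanishing on its lateral boundary, and then to invoke Lemma~\ref{lem:hom-Tolk} to conclude $w(x)=|x|^\kappa\phi(x/|x|)$, with smoothness and positivity of $\phi$ obtained from standard elliptic theory.

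First, fix $R\in[\tfrac12,1)$ and set $A_k:=\sup_{\K_{R^k}}v$ and $v_k(x):=v(R^kx)/A_k$ for $x\in\K_{R^{-k}}$. Each $v_k$ is a viscosity solution of $F(D^2v_k)=0$ by the homogeneity of $F$, it vanishes on the lateral boundary of $\K_{R^{-k}}$ inside $B_{3R^{-k}/4}$, and $\sup_{\K_1}v_k=1$. A Carleson-type estimate combined with interior Harnack for positive solutions vanishing on the Lipschitz lateral boundary gives the doubling bound $\sup_{\K_{2r}}v\le C\sup_{\K_r}v$ on $\K_{1/2}$, so the family $\{v_k\}$ is uniformly bounded on compact subsets of $\K_\infty$. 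Applying Remark~\ref{rem:concave} (interior $C^{2,\alpha}$ for concave $F$) together with boundary $C^{1,\alpha}$ regularity up to the smooth part of $\partial\K_\infty$, one extracts a diagonal subsequence converging in $C^0_{\rm loc}(\overline{\K_\infty}\setminus\{0\})$ to a nonnegative viscosity solution $w$ of $F(D^2w)=0$ in $\K_\infty$, with $w\equiv 0$ on $\partial\K_\infty\setminus\{0\}$, $\sup_{\K_1}w=1$, and $w>0$ in the interior by the strong maximum principle.

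Next, the plan is to show that any two nonnegative viscosity solutions $u_1,u_2$ of $F(D^2u)=0$ in $\K_\infty$ that vanish on the lateral boundary are proportional. Set
\[
\sigma_\ast:=\sup\{\sigma\ge 0:u_1-\sigma u_2\ge 0\text{ in }\K_\infty\}.
\]
If $u_1-\sigma_\ast u_2\not\equiv 0$, the boundary Harnack inequality~\eqref{eq:bHarnack} applied around lateral boundary points at every dyadic scale (using scale invariance of $\K_\infty$ and the uniform Lipschitz character of its lateral boundary away from the vertex) produces a strict positive lower bound for $(u_1-\sigma_\ast u_2)/u_2$, so $\sigma_\ast$ could be enlarged, contradicting its maximality. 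Applying this with $u_1(x):=w(Rx)$ and $u_2(x):=w(x)$, which are both nonnegative solutions in $\K_\infty$ vanishing on the lateral boundary by homogeneity of $F$ and scale invariance of $\K_\infty$, yields $w(Rx)=c_R w(x)$ for every $R\in(0,1)$ with some $c_R>0$. Setting $C_R:=c_R^{-1}=1/\sup_{\K_R}w$, this is exactly the hypothesis of Lemma~\ref{lem:hom-Tolk}, whose conclusion furnishes $\kappa\ge 0$ with $w(x)=|x|^\kappa\phi(\sigma)$, $\sigma:=x/|x|$, $\phi:=w|_S$. Moreover $\kappa>0$: passing \eqref{eq:mon-R} to the limit gives $w(Rx)\le(1-\e(1-R))w(x)$, whence $R^\kappa=c_R\le 1-\e(1-R)<1$ for $R<1$.

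Third, for the smoothness and strict positivity of $\phi$, rewriting $F(D^2(r^\kappa\phi(\sigma)))=0$ in polar coordinates gives a fully nonlinear uniformly elliptic second-order equation for $\phi$ on $S$ with Dirichlet datum $\phi|_{\partial S}=0$. Since $F$ is smooth on $\S\setminus\{0\}$ and $\partial S$ is smooth, bootstrapping the $C^{2,\alpha}$ estimate of Remark~\ref{rem:concave} via classical higher Schauder regularity yields $\phi\in C^\infty(\overline S)$, while the strong maximum principle applied to the nontrivial nonnegative $w$ forces $\phi>0$ in $S$. The main obstacle I anticipate is the uniqueness assertion in the unbounded cone: the cited boundary Harnack principle~\eqref{eq:bHarnack} is only a local statement on a Lipschitz cylinder, and its extension to $\K_\infty$ requires a careful iteration across dyadic scales together with a quantitative oscillation estimate for $u_1/u_2$. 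The remaining ingredients — compactness, Lemma~\ref{lem:hom-Tolk}, and Schauder regularity — are then standard.
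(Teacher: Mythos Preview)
Your overall plan---rescale the barrier $v$, pass to a limit, show the limit satisfies $w(Rx)=C_R w(x)$, and invoke Lemma~\ref{lem:hom-Tolk}---matches the paper's, but the step you flag as ``the main obstacle'' is a real gap, and in fact the uniqueness statement you rely on is \emph{false} as written. In the infinite cone $\K_\infty$ (which by definition excludes the vertex) there are, already for $F=\Delta$, two linearly independent positive solutions vanishing on the lateral boundary: $r^{\kappa_+}\phi(\sigma)$ and $r^{\kappa_-}\phi(\sigma)$, with $\kappa_-<0$ the second root of $\kappa(\kappa+n-2)=\mu_1(S)$. These are not proportional, so the assertion ``any two nonnegative viscosity solutions in $\K_\infty$ vanishing on the lateral boundary are proportional'' cannot hold without an additional hypothesis such as boundedness near the vertex. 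Even with that hypothesis added, your $\sigma_\ast$ argument does not close: the boundary Harnack \eqref{eq:bHarnack} on each dyadic annulus bounds $(u_1-\sigma_\ast u_2)/u_2$ below by a constant times its value at the reference point of that annulus, but nothing prevents those reference values from tending to zero as the scale goes to $0$ or to $\infty$, so you do not get a uniform improvement of $\sigma_\ast$.

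The paper avoids the infinite cone entirely and this is the essential simplification. It keeps the limit $v^*$ on the \emph{bounded} cone $\K_1$ and, crucially, proves the scaling identity by comparing the \emph{pre-limit} function $v$ with $v(R\,\cdot\,)$: one sets $c_{r,R}=\sup\{c:cv(x)\le v(Rx)\ \text{in}\ \K_r\}$ and $C_R=\sup_{r}c_{r,R}$, passes to the limit to get $C_R v^*(x)\le v^*(Rx)$ on $\K_1$, and if equality failed one applies the strong comparison principle and Hopf's lemma (Lemmata~\ref{lem:SCP}--\ref{lem:HCP}) on the \emph{compact} set $\partial\K_{1/2}$ to gain $(C_R+2\delta)v^*\le v^*(R\,\cdot\,)$ there; feeding this back through the convergence \eqref{eq:limit-5} yields $(C_R+\delta)v\le v(R\,\cdot\,)$ on some $\K_r$, contradicting the definition of $C_R$. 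The point is that strict comparison is only needed on a compact slice, and the contradiction is obtained against the original $v$, not against a global uniqueness theorem. Your argument that $\kappa>0$ via passing \eqref{eq:mon-R} to the limit, and your treatment of the smoothness and positivity of $\phi$, are fine and in fact make explicit points the paper leaves implicit.
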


\begin{proof}
First we want to compare $v$ with its scalings in order to obtain two sided bounds.
From \eqref{eq:mon-R},  
the strong maximum principle and Hopf's lemma (see \cite{Giga}), we derive that there is a $k > 0$ such that
\begin{equation}\label{eq:doubling-5}
kv(x)\le v(x/2)\le k^{-1}v(x),
\end{equation}
for all $x\in \p\K_{\frac14}$. 
This and the weak comparison principle imply that \eqref{eq:doubling-5} holds for all $x\in \K_{\frac14}$.
Consequently, we can use the $C^0$-estimate of \cite{CC-Fully} and \eqref{eq:mon-R} 
(combined with the barrier argument in \cite{Wang}) in order to obtain a $v^* \in  C^0(\ol\Omega) \cap  C^1(\Omega\-\{0\})$ and a sequence of $R_k > 0$ tending to zero such that
\begin{equation}\label{eq:limit-5}
\frac{v(R_kx)}{\sup\limits_{B_{R_k}} v}\to v^*
\end{equation}
in the sense $C^0(\ol\Omega) \cap  C^1(\Omega\-\{0\})$. Moreover, 
$v^*\ge 0$ in $\K_1$, $v^*=0$ on $\p\K_1\-S$, $\|v^*\|_{L^\infty(\K_1)}=1$
and $v^*$ is a viscosity solution of  $F(D^2 v^\ast)=0$ in $\K_1$. By
the strong maximum principle (see Lemma  \ref{lem:SCP}) and Hopf's lemma \cite{Giga}, $v^* > 0$, in $\K_1$, and $\na v^*\not =0$ on $\p\K_1\-\{\{0\}\cup\ol S\}$.

\smallskip 

Let  $R\in  (0,1)$. By Lemma \ref{lem:hom-Tolk}
it is
enough to prove the existence of a $C_R > 0$ satisfying
\begin{equation}\label{eq:hom-lim}
v^*(Rx)=C_Rv^*(x).
\end{equation}

\medskip 

In order to prove \eqref{eq:hom-lim} let us define  $E_{r, R}=\{c : cu(x)\le u(Rx), \forall x\in \K_r\}$. 
By \eqref{eq:doubling-5} $E_{r, R}\not =\emptyset$. Note that 
$E_{r_2, R}\subset E_{r_1, R}$,  if $r_1<r_2$ and thus 
$$
\sup_{c\in E_{r_2, R}} c\le \sup_{c\in E_{r_1, R}} c.
$$ 
In order to show \eqref{eq:hom-lim}, we set
\begin{eqnarray*}
c_{r, R}
&=&
\sup_{E_{r, R}} c, 
\\
C_R
&=&
\sup\{c_{r, R} : r\in (0, 1)\}.
\end{eqnarray*}
By the weak comparison principle, $c_{r, R}$ is decreasing with respect  to $r\in (0, 1]$. This and \eqref{eq:limit-5}  imply that 
\begin{equation}\label{eq:hom-fail}
C_Rv^*(x)\le v^*(Rx), \quad x\in \mathcal K_1.
\end{equation}
Let us suppose that \eqref{eq:hom-lim} is not true. Then, we can use \eqref{eq:hom-fail}, the strong comparison principle and Hopf's comparison principle (see Section \ref{sec:2},  Lemmata \ref{lem:SCP} and \ref{lem:HCP}) to obtain a $\delta>0$ such that
\begin{equation*}
(C_R+2\delta)v^*(x)\le v^*(Rx), x\in \p \K_{\frac12}.
\end{equation*}
This and \eqref{eq:limit-5} show that 

\begin{equation}\label{eq:hom-fail-2}
(C_R+\delta)v(x)\le v(Rx), \forall x\in \p \K_{r}.
\end{equation}
for some $r>0$.
By the weak comparison principle, \eqref{eq:hom-fail-2} holds for all 
$x\in \K_r$. 
This, however, is a contradiction to the definition of $C_R$. Hence, \eqref{eq:hom-lim} must be true.
\end{proof}

\vs
The homogeneous solutions  $r^\kappa\phi(\sigma)$,  constructed in Theorem \ref{thm:v-exist},   provide  
two-sided control for the scalings of  the solutions of $F(D^2 u)=0$ in the cone $\K_1$.  
\begin{thm}
Let $u\ge 0$ be a viscosity solution of $F(D^2 u)=0$ in the cone $\K_1$.
Then then for every  sequence  $\{R_k\}_{k=1}^\infty$, $R_k\downarrow 0$
there exists a subsequence $R_{k_j}$ such that the functions 
\[
u_j(x)=\frac{u(x_0+R_{k_j}x)}{M(R_{k_j}, x_0)}, \quad M(R_{k_j}, x_0)=\sup_{B_{R_k}(x_0)} u
\]
converges locally uniformly to $r^\kappa \phi(\sigma)$. 
\end{thm}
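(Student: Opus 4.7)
The plan is to compare $u$ with the homogeneous reference solution $v^\ast(x)=r^\kappa\phi(\sigma)$ produced by Theorem \ref{thm:v-exist} via the boundary Harnack principle, extract a subsequential blow-up limit by compactness, and finally identify that limit as a constant multiple of $v^\ast$ by a Tolksdorf-type rigidity argument analogous to the one already used in the proof of Theorem \ref{thm:v-exist}.

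\emph{Step 1: Two-sided comparison.} Apply the boundary Harnack inequality stated before Theorem \ref{C-mon} (with $\sigma=0$) to the pair $u,v^\ast$, both nonnegative solutions of $F(D^2\,\cdot\,)=0$ in $\K_1$ vanishing on $\p\K_1\setminus\{0\}$. This yields constants $0<c_1\le c_2$ with
\[
c_1 v^\ast(x)\le u(x) \le c_2 v^\ast(x), \qquad x\in \K_{1/2}.
\]
Since $v^\ast$ is homogeneous of degree $\kappa$, the bound above implies $M(R,x_0)\asymp R^\kappa$ and a doubling-type estimate parallel to \eqref{eq:doubling-5}.

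\emph{Step 2: Compactness.} Define the rescalings $u_j(x)=u(x_0+R_{k_j}x)/M(R_{k_j},x_0)$. By $\mathbf{F2^\circ}$ one has $F(D^2 u_j)=0$ on the dilated cones $\K_{1/R_{k_j}}$, and Step 1 provides uniform bounds on $u_j$ on compact subsets of the infinite cone $\K_\infty$. Interior $C^{1,\alpha}$ estimates (Remark \ref{rem:concave}) together with the Lipschitz-boundary Hölder estimates of \cite{Wang} give, via a diagonal extraction, a subsequence converging locally uniformly on $\overline{\K_\infty}\setminus\{0\}$ to some $u_\infty\ge 0$ that solves $F(D^2 u_\infty)=0$ in $\K_\infty$, vanishes on $\p\K_\infty\setminus\{0\}$, and satisfies $c_1 v^\ast \le u_\infty \le c_2 v^\ast$.

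\emph{Step 3: Rigidity (the main obstacle).} It remains to show $u_\infty = Cv^\ast$ for some $C>0$. Mimicking the argument in the proof of Theorem \ref{thm:v-exist}, set
\[
C^\ast := \sup\{\, c>0 : c\,v^\ast \le u_\infty \text{ in } \K_1 \,\},
\]
which is positive and finite by Step 1. If $C^\ast v^\ast\not\equiv u_\infty$ in $\K_1$, then the strong comparison principle (Lemma \ref{lem:SCP}) forces $C^\ast v^\ast < u_\infty$ strictly in the interior, while Hopf's lemma (Lemma \ref{lem:HCP}) at a point of $\p \K_{1/2}\setminus\{0\}$, combined with the smoothness of $v^\ast$ away from the vertex, yields a strict inward normal-derivative gap. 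This upgrades to $(C^\ast+\delta)v^\ast\le u_\infty$ on $\p\K_{1/2}$ for some $\delta>0$, and the weak comparison principle propagates the inequality to $\K_{1/2}$, contradicting the maximality of $C^\ast$. Hence $u_\infty = C^\ast v^\ast$ in $\K_1$, and uniqueness of the Dirichlet problem in each annular cone $\K_R\setminus\K_1$ extends the identity to all of $\K_\infty$. Passing the normalization $\sup_{B_1\cap \K_\infty}u_j = 1$ to the limit fixes $C^\ast$, so that after absorbing the multiplicative constant into $\phi$ the subsequential limit is exactly $r^\kappa\phi(\sigma)$.
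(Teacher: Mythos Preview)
Your Steps 1 and 2 (boundary Harnack comparison and compactness) are essentially the same as the paper's. The problem is in Step 3.

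You set $C^\ast=\sup\{c>0:\ c\,v^\ast\le u_\infty\ \text{in }\K_1\}$, and then argue that strong comparison and Hopf yield $(C^\ast+\delta)v^\ast\le u_\infty$ on $\p\K_{1/2}$, which by weak comparison propagates to $\K_{1/2}$. But this does \emph{not} contradict the maximality of $C^\ast$: the supremum was taken over $\K_1$, not $\K_{1/2}$. Nothing prevents $C^\ast v^\ast$ from touching $u_\infty$ on the outer cap $S=\{|x|=1\}$ (where you have no information), and indeed this is exactly where contact will occur if $u_\infty$ is not homogeneous. So the ``contradiction'' in Step 3 is not a contradiction at all, and the rigidity argument collapses.

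The paper avoids this by running the supremum argument at the level of the original $u$, not the limit. One defines
\[
c_R=\sup\{c\ge 0:\ c\,b\le u\ \text{in }\K_R\},\qquad b=r^\kappa\phi(\sigma),
\]
observes that $c_R$ is monotone in $R$ by weak comparison, and sets $c^\ast=\lim_{R\to 0}c_R$. Any subsequential limit $u^\ast$ then satisfies $c^\ast b\le u^\ast$ in $\K_1$. If equality fails, strong comparison and Hopf give $(c^\ast+2\delta)b\le u^\ast$ on $\p\K_{1/2}$; the uniform convergence $u_k\to u^\ast$ pulls this back to $(c^\ast+\delta)b\le u$ on $\p\K_r$ for some small $r>0$, hence in $\K_r$ by weak comparison. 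This contradicts the \emph{definition} of $c^\ast$ as $\lim_{R\to 0}c_R$. The point is that going back to $u$ at a small scale is exactly what beats the ``outer cap'' obstruction that your purely limit-level argument cannot handle.
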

\begin{proof}
Let $b=r^\kappa\phi(\sigma)$ then by the boundary Harnack principle \eqref{eq:bHarnack} there is a constant $C>0$
such that 
\begin{equation}\label{eq:nd9}
\frac1C b \le u\le C b\quad \text{ in}\  \K_{\frac12}
\end{equation}
We want to prove that 
there is a subsequence of $\{R_k\}$ such that 
\begin{equation}\label{eq:nd10}
u_k\to b
\end{equation}
in $C^0(\ol{\K_1})\cap C^1(\ol{\K_1}\-\{0\})$.

Define
\[
c_R=\sup_{E_R} c, \quad E_R= \{c\ge 0 : cb\le u, \text{in} \ \K_R\}.
\]
By \eqref{eq:nd9} $E_R\not =\emptyset$. 
From the weak comparison principle (as in the proof of Theorem \ref{thm:v-exist}), one derives that
$c_R$ 
is decreasing with respect to $R$. Consequently, the limit
\[
c^*=\lim_{R\to 0} c_R
\]
exists and it is positive. 
The $C^\alpha$-estimates of \cite{CC-Fully}, $C^1(\ol{\K_1}\-\{0\})$ regularity result 
and \eqref{eq:nd9} imply that there is a subsequence of $\{R_k\}$
and a $u^*\in C^0(\ol{\K_1})\cap C^1(\ol{\K_1}\-\{0\})$ such that 
\begin{equation}\label{eq:nd11}
u_k\to u^*
\end{equation}
in the sense of $C^0(\ol{\K_1})\cap C^1(\ol{\K_1}\-\{0\})$.
Moreover, $u^*=0$ on $\K_1\-S$, $u^*$ solves $F(D^2u^*)=0$
in $\K_1$ and 
\begin{equation}\label{eq:nd12}
c^*b\le u^*, \quad \text{in}\  \K_1. 
\end{equation}


Now, suppose that $u^*$ is not identical to $c^*b$ , in $\K_1$. 
Then, \eqref{eq:nd12}, the strong comparison principle and Hopf's comparison principle 
(see Section \ref{sec:2},  Lemmata \ref{lem:SCP} and \ref{lem:HCP})
 imply that there is a $\delta>0 $ such that
\begin{equation*}
(c^*+2\delta)b(x)\le u^*(x), x\in \p \K_{\frac12}.
\end{equation*}
This and \eqref{eq:nd11} show that 

\begin{equation}
(c^*+\delta)b(x)\le u(x), \forall x\in \p \K_{r}.
\end{equation}
for some $r>0$.
The weak comparison principle shows that this is true, also in 
$\K_r$. This, however, 
is a contradiction to the definition of $c^*$. Hence, $c^*b = u^*$, 
in $\K_1$, and \eqref{eq:nd10} is true.
\end{proof}

\begin{bibdiv}
\begin{biblist}
\bib{C-1}{article}{
   author={Caffarelli, Luis A.},
   title={A Harnack inequality approach to the regularity of free
   boundaries. I. Lipschitz free boundaries are $C^{1,\alpha}$},
   journal={Rev. Mat. Iberoamericana},
   volume={3},
   date={1987},
   number={2},
   pages={139--162},
   issn={0213-2230},
   review={\MR{990856}},
   doi={10.4171/RMI/47},
}

\bib{Caff-cpde}{article}{
   author={Caffarelli, Luis A.},
   title={Compactness methods in free boundary problems},
   journal={Comm. Partial Differential Equations},
   volume={5},
   date={1980},
   number={4},
   pages={427--448},
   issn={0360-5302},
   review={\MR{567780}},
}

\bib{CC-Fully}{book}{
   author={Caffarelli, Luis A.},
   author={Cabr\'{e}, Xavier},
   title={Fully nonlinear elliptic equations},
   series={American Mathematical Society Colloquium Publications},
   volume={43},
   publisher={American Mathematical Society, Providence, RI},
   date={1995},
   pages={vi+104},
   isbn={0-8218-0437-5},
   review={\MR{1351007}},
   doi={10.1090/coll/043},
}

\bib{CK}{article}{
   author={Chanillo, Sagun},
   author={Kenig, Carlos E.},
   title={Weak uniqueness and partial regularity for the composite membrane
   problem},
   journal={J. Eur. Math. Soc. (JEMS)},
   volume={10},
   date={2008},
   number={3},
   pages={705--737},
   issn={1435-9855},
   review={\MR{2421158}},
   doi={10.4171/JEMS/127},
}
\bib{Kenig}{article}{
   author={Chanillo, Sagun},
   author={Kenig, Carlos E.},
   author={To, Tung},
   title={Regularity of the minimizers in the composite membrane problem in
   $\Bbb R^2$},
   journal={J. Funct. Anal.},
   volume={255},
   date={2008},
   number={9},
   pages={2299--2320},
   issn={0022-1236},
   review={\MR{2473259}},
   doi={10.1016/j.jfa.2008.04.015},
}

\bib{Chemin}{book}{
   author={Chemin, Jean-Yves},
   title={Perfect incompressible fluids},
   series={Oxford Lecture Series in Mathematics and its Applications},
   volume={14},
   note={Translated from the 1995 French original by Isabelle Gallagher and
   Dragos Iftimie},
   publisher={The Clarendon Press, Oxford University Press, New York},
   date={1998},
   pages={x+187},
   isbn={0-19-850397-0},
   review={\MR{1688875}},
}

\bib{DK}{article}{
   author={Dipierro, Serena},
   author={Karakhanyan, Aram L.},
   title={Stratification of free boundary points for a two-phase variational
   problem},
   journal={Adv. Math.},
   volume={328},
   date={2018},
   pages={40--81},
   issn={0001-8708},
   review={\MR{3771123}},
}

\bib{Feldman}{article}{
   author={Feldman, Mikhail},
   title={Regularity of Lipschitz free boundaries in two-phase problems for
   fully nonlinear elliptic equations},
   journal={Indiana Univ. Math. J.},
   volume={50},
   date={2001},
   number={3},
   pages={1171--1200},
   issn={0022-2518},
   review={\MR{1871352}},
   doi={10.1512/iumj.2001.50.1921},
}

\bib{Feroe}{article}{
   author={Feroe, John A.},
   title={Existence and stability of multiple impulse solutions of a nerve
   equation},
   journal={SIAM J. Appl. Math.},
   volume={42},
   date={1982},
   number={2},
   pages={235--246},
   issn={0036-1399},
   review={\MR{650219}},
   doi={10.1137/0142017},
}

\bib{Friedman}{book}{
   author={Friedman, Avner},
   title={Variational principles and free-boundary problems},
   series={Pure and Applied Mathematics},
   note={A Wiley-Interscience Publication},
   publisher={John Wiley \& Sons, Inc., New York},
   date={1982},
   pages={ix+710},
   isbn={0-471-86849-3},
   review={\MR{679313}},
}

\bib{Giga}{article}{
   author={Giga, Yoshikazu},
   author={Ohnuma, Masaki},
   title={On strong comparison principle for semicontinuous viscosity
   solutions of some nonlinear elliptic equations},
   journal={Int. J. Pure Appl. Math.},
   volume={22},
   date={2005},
   number={2},
   pages={165--184},
   issn={1311-8080},
   review={\MR{2168254}},
}

\bib{Yudovich}{article}{
   author={Judovi\v{c}, V. I.},
   title={Non-stationary flows of an ideal incompressible fluid},
   language={Russian},
   journal={\v{Z}. Vy\v{c}isl. Mat. i Mat. Fiz.},
   volume={3},
   date={1963},
   pages={1032--1066},
   issn={0044-4669},
   review={\MR{0158189}},
}

\bib{Lee}{book}{
   author={Lee, Ki-Ahm},
   title={Obstacle problems for the fully nonlinear elliptic operators},
   note={Thesis (Ph.D.)--New York University},
   publisher={ProQuest LLC, Ann Arbor, MI},
   date={1998},
   pages={53},
   isbn={978-0599-04972-7},
   review={\MR{2698202}},
}

\bib{Lee-Sh}{article}{
   author={Lee, Ki-Ahm},
   author={Shahgholian, Henrik},
   title={Regularity of a free boundary for viscosity solutions of nonlinear
   elliptic equations},
   journal={Comm. Pure Appl. Math.},
   volume={54},
   date={2001},
   number={1},
   pages={43--56},
   issn={0010-3640},
   review={\MR{1787106}},
}

\bib{McKean}{article}{
   author={McKean, H. P., Jr.},
   title={Nagumo's equation},
   journal={Advances in Math.},
   volume={4},
   date={1970},
   pages={209--223 (1970)},
   issn={0001-8708},
   review={\MR{0260438}},
   doi={10.1016/0001-8708(70)90023-X},
}

\bib{Monneau-W}{article}{
   author={Monneau, R.},
   author={Weiss, G. S.},
   title={An unstable elliptic free boundary problem arising in solid
   combustion},
   journal={Duke Math. J.},
   volume={136},
   date={2007},
   number={2},
   pages={321--341},
   issn={0012-7094},
   review={\MR{2286633}},
}

\bib{Pauwelussen}{article}{
   author={Pauwelussen, J. P.},
   title={Nerve impulse propagation in a branching nerve system: a simple
   model},
   journal={Phys. D},
   volume={4},
   date={1981/82},
   number={1},
   pages={67--88},
   issn={0167-2789},
   review={\MR{636471}},
   doi={10.1016/0167-2789(81)90005-1},
}

\bib{Rinzel}{book}{
   author={Rinzel, John Matthew},
   title={Traveling-wave solutions of a nerve conduction equation
   },
   note={Thesis (Ph.D.)--New York University},
   publisher={ProQuest LLC, Ann Arbor, MI},
   date={1973},
   pages={67},
   review={\MR{2623547}},
}

\bib{Xavier}{article}{
   author={Ros-Oton, Xavier},
   author={Serra, Joaquim},
   title={The structure of the free boundary in the fully nonlinear thin
   obstacle problem},
   journal={Adv. Math.},
   volume={316},
   date={2017},
   pages={710--747},
   issn={0001-8708},
   review={\MR{3672918}},
   doi={10.1016/j.aim.2017.06.032},
}

\bib{ST}{article}{
   author={Soave, Nicola},
   author={Terracini, Susanna},
   title={The nodal set of solutions to some elliptic problems: sublinear
   equations, and unstable two-phase membrane problem},
   journal={Adv. Math.},
   volume={334},
   date={2018},
   pages={243--299},
   issn={0001-8708},
   review={\MR{3828737}},
   doi={10.1016/j.aim.2018.06.007},
}

\bib{Tolksdorf}{article}{
   author={Tolksdorf, Peter},
   title={On the Dirichlet problem for quasilinear equations in domains with
   conical boundary points},
   journal={Comm. Partial Differential Equations},
   volume={8},
   date={1983},
   number={7},
   pages={773--817},
   issn={0360-5302},
   review={\MR{700735}},
   doi={10.1080/03605308308820285},
}

\bib{Wang}{article}{
   author={Wang, Pei-Yong},
   title={Regularity of free boundaries of two-phase problems for fully
   nonlinear elliptic equations of second order. I. Lipschitz free
   boundaries are $C^{1,\alpha}$},
   journal={Comm. Pure Appl. Math.},
   volume={53},
   date={2000},
   number={7},
   pages={799--810},
   issn={0010-3640},
   review={\MR{1752439}},
   doi={10.1002/(SICI)1097-0312(200007)53:7<799::AID-CPA1>3.0.CO;2-Q},
}

\end{biblist}
\end{bibdiv}

\end{document}